\numberwithin{equation}{section}
\newtheorem{theorem}{Theorem}
\newtheorem{lemma}[theorem]{Lemma}
\newtheorem{proposition}[theorem]{Proposition}
\newtheorem{corollary}[theorem]{Corollary}
\theoremstyle{definition}
\newtheorem{remark}[theorem]{Remark}
\renewcommand{\leq}{\leqslant}
\renewcommand{\geq}{\geqslant}
\newcommand{\R}{\mathbb{R}}
\newcommand{\N}{\mathbb{N}}
\newcommand{\Z}{\mathbb{Z}}
\begin{document}

\title{Sharp estimates for Gowers norms on discrete cubes}

\author[A. Beker]{Adrian Beker}
\address{A. B., Department of Mathematics, Faculty of Science, University of Zagreb, Bijeni\v{c}ka cesta 30, 10000 Zagreb, Croatia}
\email{adrian.beker@math.hr}

\author[T. Crmari\'{c}]{Ton\'{c}i Crmari\'{c}}
\address{T. C., Department of Mathematics, Faculty of Science, University of Split, Ru\dj{}era Bo\v{s}kovi\'{c}a 33, 21000 Split, Croatia}
\email{tcrmaric@pmfst.hr}

\author[V. Kova\v{c}]{Vjekoslav Kova\v{c}}
\address{V. K., Department of Mathematics, Faculty of Science, University of Zagreb, Bijeni\v{c}ka cesta 30, 10000 Zagreb, Croatia}
\email{vjekovac@math.hr}

\subjclass[2020]{Primary 05D05; 
Secondary 11B30, 
94A17} 

\keywords{additive energy, binary cube, Lebesgue norm, Shannon entropy, asymptotic formula}

\begin{abstract}
We study optimal dimensionless inequalities 
\[ \|f\|_{\textup{U}^k} \leq \|f\|_{\ell^{p_{k,n}}} \] 
that hold for all functions $f\colon\mathbb{Z}^d\to\mathbb{C}$ supported in $\{0,1,\ldots,n-1\}^d$ and estimates
\[ \|\mathbbm{1}_A\|_{\textup{U}^k}^{2^k}\leq |A|^{t_{k,n}} \]
that hold for all subsets $A$ of the same discrete cubes. A general theory, analogous to the work of de Dios Pont, Greenfeld, Ivanisvili, and Madrid, is developed to show that the critical exponents are related by $p_{k,n} t_{k,n} = 2^k$. This is used to prove the three main results of the paper:
\begin{itemize}
\item an explicit formula for $t_{k,2}$, which generalizes a theorem by Kane and Tao,
\item two-sided asymptotic estimates for $t_{k,n}$ as $n\to\infty$ for a fixed $k\geq2$, which generalize a theorem by Shao, and 
\item a precise asymptotic formula for $t_{k,n}$ as $k\to\infty$ for a fixed $n\geq2$.
\end{itemize}
\end{abstract}

\maketitle


\section{Introduction}
Motivation for this article is two-fold. From the combinatorial side, we continue the recently started line of investigation of upper bounds on generalized additive energies of subsets $A$ of the discrete cube 
\begin{equation}\label{eq:thecube}
\{0,1,2,\ldots,n-1\}^d. 
\end{equation}
From the analytical side, we take omnipresent classical inequalities between the Gowers norms and the Lebesgue norms for functions $f\colon\Z^d\to\mathbb{C}$ and sharpen them when we additionally assume that $f$ is supported in the cube \eqref{eq:thecube}.

The \emph{additive energy} 
\begin{equation}\label{eq:addenergy}
E_2(A) := \big| \{ (a_1,a_2,a_3,a_4)\in A^4 \,:\, a_1-a_2 = a_3-a_4 \} \big|
\end{equation}
of a finite set $A\subset\Z^d$ appears naturally in additive combinatorics; see the book by Tao and Vu \cite[Section 2.3]{TV06}.
For sets $A\subseteq\{0,1\}^d\subset\Z^d$, Kane and Tao \cite[Theorem 7]{KT17} proved the optimal energy bound in terms of the set size,
\[ E_2(A) \leq |A|^{\log_2 6}. \]
The exponent $\log_2 6$ is sharp since the equality holds when $A$ is the whole binary cube $\{0,1\}^d$.
Three generalizations of $E_2$ also appear naturally in applications and they were also systematically studied on their own by Schoen and Shkredov \cite{SS13} and Shkredov \cite{Shk14}.
De Dios Pont, Greenfeld, Ivanisvili, and Madrid \cite{DGIM21} revisited two of them and called them the \emph{higher energies}
\[ \widetilde{E}_k(A) := \big| \{ (a_1,a_2,\ldots,a_{2k-1},a_{2k})\in A^{2k} \,:\, a_1-a_2 = a_3-a_4 = \cdots = a_{2k-1}-a_{2k} \} \big| \]
and the \emph{$k$-additive energies}
\[ E_k(A) := \big| \{ (a_1,a_2,\ldots,a_{2k-1},a_{2k})\in A^{2k} \,:\, a_1+\cdots+a_k = a_{k+1}+\cdots+a_{2k} \} \big|. \]
Here $k\geq 2$ is an integer and $A$ is again a finite subset of $\Z^d$. Note that $\widetilde{E}_2=E_2$ is the usual additive energy \eqref{eq:addenergy}. The authors of \cite{DGIM21} then also restricted their attention to the sets $A\subseteq\{0,1\}^d$.
Following an inductive scheme similar to the one in \cite[Section 2]{KT17}, they proved sharp inequalities for $\widetilde{E}_k$,
\[ \widetilde{E}_k(A) \leq |A|^{\log_2 (2^k+2)}; \]
see \cite[Theorem 1]{DGIM21}.
The energies $E_k$ satisfy similarly looking sharp estimates
\[ E_k(A) \leq |A|^{\log_2 \binom{2k}{k}}, \]
but the proof was more technical: it was done for $k\leq 100$ in \cite[Section 3]{DGIM21}, while one of the authors of the present paper established an ingredient needed for general $k$ in \cite{Kov23}.
A third possible quantity that generalizes \eqref{eq:addenergy} counts (possibly degenerate) parallelotopes with vertices in $A$, with a lot of overcounting. It is defined as
\begin{align} 
P_k(A) := \big| \big\{ (a,h_1,\ldots,h_k)\in(\Z^d)^{k+1} \,:\ & a+\epsilon_1 h_1+\cdots+\epsilon_k h_k \in A \nonumber \\
& \text{for every } (\epsilon_1,\ldots,\epsilon_k)\in\{0,1\}^k \big\} \big| \label{eq:Pkdef}
\end{align}
for any $k\geq2$. It has already been studied by Shkredov \cite{Shk14}, who also discussed numerous relations between all three kinds of generalized additive energies.
It is precisely the generalized energies $P_k$ that are the topic of interest in this paper.

For a general complex function $f$ defined on an additively written (discrete) abelian group and a positive integer $k$ one defines the \emph{Gowers uniformity norm} as
\[ \|f\|_{\textup{U}^k} := \bigg( \sum_{a,h_1,\ldots,h_k} \prod_{(\epsilon_1,\ldots,\epsilon_k)\in\{0,1\}^k} \mathcal{C}^{\epsilon_1+\cdots+\epsilon_k} f(a+\epsilon_1 h_1+\cdots+\epsilon_k h_k) \bigg)^{1/2^k}, \]
where $\mathcal{C}\colon z\mapsto \bar{z}$ denotes the operator of complex conjugation, so that $\mathcal{C}^2$ is the identity.
This is, in fact, a norm for $k\geq2$, defined on the set of functions for which the above expression is finite; see Section \ref{sec:notation}. 
These norms were introduced by Gowers \cite{Gow98,Gow01} in his quantitative proof of Szemer\'{e}di's theorem.
If $f$ is specialized to be the indicator function $\mathbbm{1}_A$ of a finite set $A\subset\Z^d$, then we recover precisely the quantity $P_k$ from \eqref{eq:Pkdef}, namely
\[ P_k(A) = \|\mathbbm{1}_A\|_{\textup{U}^k}^{2^k}. \]
For a finitely supported function $f\colon\Z^d\to\mathbb{C}$ the inequality
\begin{equation}\label{eq:GowersLp}
\|f\|_{\textup{U}^k} \leq \|f\|_{\ell^p}
\end{equation}
is well-known to hold for every exponent
\begin{equation}\label{eq:critrange}
p\leq \frac{2^k}{k+1};
\end{equation}
see \cite{ET12}. Estimate \eqref{eq:GowersLp}, no matter how simple, has found versatile important applications, for instance in the study of generalized multiplicative sequences \cite[Section 2]{FK19} and automatic sequences \cite[Proposition 2.2]{BKM23}, or in geometric measure theory of large sets \cite[Section 6]{DK22}.
The number $2^k/(k+1)$ was called the critical exponent in \cite[Subsection 1.1]{ET12} and it cannot be increased. Namely, it is easy to see that 
\begin{equation}\label{eq:comparable}
\|\mathbbm{1}_{\{0,1,\ldots,n-1\}}\|_{\textup{U}^k} \text{ is comparable to } n^{(k+1)/2^k}
\end{equation}
up to an unimportant multiplicative constant, while
\[ \|\mathbbm{1}_{\{0,1,\ldots,n-1\}}\|_{\ell^p} = n^{1/p}. \]
If we had \eqref{eq:GowersLp} with some $p>2^k/(k+1)$, we would arrive at a contradiction in the limit as $n\to\infty$.

One naturally wonders if \eqref{eq:critrange} can still be improved if we additionally assume that $f$ is supported in the binary cube. Also, one might attempt to control generalized additive energies $P_k$ of $A\subseteq\{0,1\}^d$ by optimal powers of the size of $A$. Both of these problems are resolved by the first result of this paper.

\begin{theorem}\label{thm:binary}
Let $d\geq0$ and $k\geq2$ be integers. For every function $f\colon\Z^d\to\mathbb{C}$ supported in $\{0,1\}^d$ the inequality \eqref{eq:GowersLp} holds for every exponent
\[ 0 < p \leq \frac{2^k}{\log_2 (2k+2)}. \]
In particular, by taking the largest possible $p$ and $f=\mathbbm{1}_A$, we get that
\begin{equation}\label{eq:Pkest}
P_k(A) \leq |A|^{\log_2 (2k+2)}
\end{equation}
holds for every set $A\subseteq\{0,1\}^d$.
\end{theorem}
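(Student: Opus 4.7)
My plan is to first reduce the function inequality to the set estimate $P_k(A) \le |A|^{\log_2(2k+2)}$ using the general critical-exponent correspondence $p_{k,2} t_{k,2} = 2^k$ developed earlier in the paper, and then to prove the set estimate by induction on $d$, in the spirit of Kane--Tao \cite{KT17} and de Dios Pont, Greenfeld, Ivanisvili, and Madrid \cite{DGIM21}. The base cases $d \in \{0,1\}$ are immediate; in particular a direct enumeration gives $P_k(\{0,1\}) = 2k+2$, already witnessing sharpness.

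For the inductive step I would split $A = A_0 \sqcup A_1$ along the last coordinate, viewing $A_0, A_1 \subseteq \{0,1\}^{d-1}$, and write
\[
\mathbbm{1}_A(x', x_d) = \mathbbm{1}_{A_0}(x')\mathbbm{1}_{\{0\}}(x_d) + \mathbbm{1}_{A_1}(x')\mathbbm{1}_{\{1\}}(x_d).
\]
Expanding $\|\mathbbm{1}_A\|_{\textup{U}^k}^{2^k}$ by multilinearity of the Gowers inner product yields a sum over labelings $\omega\colon\{0,1\}^k\to\{0,1\}$, each term factoring into a Gowers inner product on $\Z^{d-1}$ of the functions $\mathbbm{1}_{A_{\omega(\epsilon)}}$ times an analogous object on $\Z$ built from $\mathbbm{1}_{\{\omega(\epsilon)\}}$. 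The $\Z$-factor equals $1$ precisely when $\omega$ itself is the vertex data of a parallelepiped in $\{0,1\}$, and vanishes otherwise; a short enumeration identifies exactly $2k+2$ admissible $\omega$: the two constants and the $2k$ maps $\epsilon\mapsto\epsilon_i$ and $\epsilon\mapsto 1-\epsilon_i$ for $i = 1,\ldots,k$. The two constants contribute $\|\mathbbm{1}_{A_0}\|_{\textup{U}^k}^{2^k} + \|\mathbbm{1}_{A_1}\|_{\textup{U}^k}^{2^k}$, while each of the remaining $2k$ contributions is a Gowers inner product with $2^{k-1}$ copies of $\mathbbm{1}_{A_0}$ and $2^{k-1}$ copies of $\mathbbm{1}_{A_1}$, bounded by $\|\mathbbm{1}_{A_0}\|_{\textup{U}^k}^{2^{k-1}}\|\mathbbm{1}_{A_1}\|_{\textup{U}^k}^{2^{k-1}}$ via the Cauchy--Schwarz--Gowers inequality. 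Writing $s = \log_2(2k+2)$, $x = |A_0|$, $y = |A_1|$ and applying the inductive hypothesis yields $\|\mathbbm{1}_A\|_{\textup{U}^k}^{2^k} \le x^s + y^s + 2k\,(xy)^{s/2}$.

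The main obstacle will be the elementary inequality $x^s + y^s + 2k(xy)^{s/2} \le (x+y)^s$ for $x, y \ge 0$ with $s = \log_2(2k+2)$. Equality holds at $x = y$ (since $2^s - 2 = 2k$) and along the axes $xy = 0$, so the induction leaves no slack to exploit; I expect to verify the inequality by a calculus argument on the one-variable function $t \mapsto (1+t)^s - 1 - t^s - 2k\,t^{s/2}$ on $[0,\infty)$, exploiting that $s > 2$ whenever $k \ge 2$ to pin down $t = 0$ and $t = 1$ as the only zeros. Sharpness of the critical exponent $2^k/\log_2(2k+2)$ in \eqref{eq:GowersLp} is then witnessed by $f = \mathbbm{1}_{\{0,1\}^d}$, for which tensorization of the Gowers norm gives $\|f\|_{\textup{U}^k}^{2^k} = (2k+2)^d = |\{0,1\}^d|^{\log_2(2k+2)}$.
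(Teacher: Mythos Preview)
Your proposal is correct and lands on exactly the same core inequality as the paper, namely $x^t + y^t + 2k\,x^{t/2}y^{t/2} \le (x+y)^t$ with $t=\log_2(2k+2)$ (this is the paper's Lemma~\ref{lm:binary}). The only difference is in packaging: the paper invokes Proposition~\ref{prop:product} in the direction \ref{it1}$\implies$\ref{it2}, so that it suffices to verify the one-dimensional functional inequality \eqref{eq:twopointineq}, which after the substitution $x=f(0)^{2^k/t}$, $y=f(1)^{2^k/t}$ is precisely the elementary lemma; the induction on $d$ and the Gowers--Cauchy--Schwarz step are already absorbed into the proof of Proposition~\ref{prop:product}. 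You instead carry out that induction explicitly for indicator functions and use Proposition~\ref{prop:product} only in the direction \ref{it3}$\implies$\ref{it2}. Both routes are valid and equivalent in strength; yours mirrors the Kane--Tao/\cite{DGIM21} template more visibly, while the paper's is slightly more economical since it avoids redoing work already encapsulated in Proposition~\ref{prop:product}. Your sketch of the calculus proof of the elementary inequality is also in line with the paper's Lemma~\ref{lm:binary}, which normalizes to $x+(1-x)=1$ and splits $[0,1/2]$ at the point where $k(x(1-x))^{t/2-1}=1$.
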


Both the stated range of $p$ and inequality \eqref{eq:Pkest} are sharp since the equality holds for $A=\{0,1\}^d$. Note that $\Z^0$ is interpreted as a group consisting only of the neutral element for the addition.

\smallskip
Estimates for $E_2$ on larger discrete cubes \eqref{eq:thecube} were studied by de Dios Pont, Greenfeld, Ivanisvili, and Madrid \cite[Subsection 1.3]{DGIM21} and Shao \cite{Sha24}; also see a more general setting by Hegyv\'{a}ri \cite{Heg23}.
In this paper we also want to study nontrivial inequalities for $\|f\|_{\textup{U}^k}$ and $P_k(A)$, i.e., \eqref{eq:GowersLp} and
\begin{equation}\label{eq:GowersA}
P_k(A) \leq |A|^t,
\end{equation}
on discrete cubes.
The first step in these considerations is the following general proposition, which reduces higher-dimensional estimates to an optimization problem in finitely many variables. It will be formulated and proven in the spirit of the results from \cite[Section 4]{DGIM21}. 

\begin{proposition}\label{prop:product}
Let $k,n\geq2$ be integers and let $p,t>0$ be real numbers such that $pt=2^k$. The following are equivalent.
\begin{enumerate}[$(1)$]
\item\label{it1}
Inequality \eqref{eq:GowersLp} holds for every function $f\colon\Z\to[0,\infty)$ supported in the set $\{0,1,2,\ldots,n-1\}$.
\item\label{it2}
Inequality \eqref{eq:GowersLp} holds for every integer $d\geq0$ and every function $f\colon\Z^d\to\mathbb{C}$ supported in the discrete cube \eqref{eq:thecube}.
\item\label{it3}
Inequality \eqref{eq:GowersA} holds for every integer $d\geq0$ and every subset $A$ of the discrete cube \eqref{eq:thecube}.
\item\label{it4}
There exists a constant $C\in(0,\infty)$ such that $P_k(A) \leq C |A|^t$ holds for every integer $d\geq0$ and every subset $A$ of \eqref{eq:thecube}.
\end{enumerate}
\end{proposition}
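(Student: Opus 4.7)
The plan is to establish the cycle $(1)\Rightarrow(2)\Rightarrow(3)\Rightarrow(4)\Rightarrow(1)$. The trivial directions are $(2)\Rightarrow(1)$ by specialising to $d=1$; $(2)\Rightarrow(3)$ by substituting $f=\mathbbm{1}_A$, so that $\|\mathbbm{1}_A\|_{\textup{U}^k}^{2^k}=P_k(A)$ and $\|\mathbbm{1}_A\|_{\ell^p}^{2^k}=|A|^{2^k/p}=|A|^t$ by the assumption $pt=2^k$; and $(3)\Rightarrow(4)$ with $C=1$.

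For $(1)\Rightarrow(2)$, I first reduce to nonnegative $f$ via the triangle-type bound $\|f\|_{\textup{U}^k}\leq\||f|\|_{\textup{U}^k}$ (immediate by placing the absolute value inside the defining sum) and then induct on $d$. For the step $d\to d+1$, split $\Z^{d+1}=\Z\times\Z^d$ and apply the Gowers--Cauchy--Schwarz inequality in the $\Z^d$ factor:
\[
\sum_{a^{(2)},h^{(2)}\in\Z^d}\prod_{\epsilon\in\{0,1\}^k}F(a^{(1)}+\epsilon\cdot h^{(1)},\,a^{(2)}+\epsilon\cdot h^{(2)})\ \leq\ \prod_{\epsilon}\|F(a^{(1)}+\epsilon\cdot h^{(1)},\cdot)\|_{\textup{U}^k(\Z^d)}.
\]
The inductive hypothesis replaces each slice norm by $\|F(x_1,\cdot)\|_{\ell^p(\Z^d)}$; summing over $(a^{(1)},h^{(1)})\in\Z^{k+1}$ yields $\|F\|_{\textup{U}^k(\Z^{d+1})}^{2^k}\leq\|u\|_{\textup{U}^k(\Z)}^{2^k}$ where $u(x_1):=\|F(x_1,\cdot)\|_{\ell^p(\Z^d)}$, which is supported in $\{0,\ldots,n-1\}$. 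Applying statement $(1)$ to $u$ and invoking Fubini's identity $\|u\|_{\ell^p(\Z)}^p=\|F\|_{\ell^p(\Z^{d+1})}^p$ closes the induction.

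For $(4)\Rightarrow(3)$, I tensorise: the Cartesian power $A^m\subseteq\{0,\ldots,n-1\}^{dm}$ satisfies $P_k(A^m)=P_k(A)^m$, because the Gowers norm of a tensor of indicator functions factorises, and $|A^m|=|A|^m$; applying $(4)$ to $A^m$ gives $P_k(A)^m\leq C|A|^{mt}$, whence $P_k(A)\leq C^{1/m}|A|^t\to|A|^t$ as $m\to\infty$.

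The heart of the argument is $(3)\Rightarrow(1)$. Following the strategy of \cite[Section~4]{DGIM21}, for a nonnegative rational-valued $f\colon\{0,\ldots,n-1\}\to[0,\infty)$ and a large integer $d'$, I would construct a set $A_{d'}\subseteq\{0,\ldots,n-1\}^{1+d'}$ whose cardinality $|A_{d'}|$ scales as $n^{d'}\|f\|_{\ell^p}^p$ while $P_k(A_{d'})$ scales as $\|f\|_{\textup{U}^k}^{2^k}$ times the ambient cube factor $\|\mathbbm{1}_{\{0,\ldots,n-1\}^{d'}}\|_{\textup{U}^k}^{2^k}$. Applying $(3)$ to $A_{d'}$, the dimensional prefactors are balanced against one another by the relation $pt=2^k$ (indeed, $(3)$ applied to the full cube already forces the critical identity in the limit), and the desired inequality $\|f\|_{\textup{U}^k}\leq\|f\|_{\ell^p}$ emerges as $d'\to\infty$. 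The chief obstacle is the precise calibration of this lift: a naive Bernoulli inclusion of $(x,y)$ with probability $f(x)$ produces a bound in terms of $\|f\|_{\ell^1}$ rather than $\|f\|_{\ell^p}$, while inclusion with probability $f(x)^p$ replaces $\|f\|_{\textup{U}^k}$ by $\|f^p\|_{\textup{U}^k}$, so the correct encoding must be more deliberate and combined with a concentration estimate to control the $t$-th moment of $|A_{d'}|$ uniformly in $d'$.
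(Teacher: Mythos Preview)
Your implications $(1)\Rightarrow(2)\Rightarrow(3)\Rightarrow(4)$ are correct and essentially identical to the paper's, and your $(4)\Rightarrow(3)$ via tensoring $A\mapsto A^m$ is a valid extra observation (the paper does not isolate this step but uses the same tensor idea later at the function level).

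The genuine gap is in your attempted $(3)\Rightarrow(1)$. You correctly diagnose the obstruction: there is no obvious way to build a set $A_{d'}$ from a function $f$ so that $|A_{d'}|$ captures $\|f\|_{\ell^p}$ while $P_k(A_{d'})$ captures $\|f\|_{\textup{U}^k}$, and your sketch does not close this. The paper sidesteps the problem entirely by proving $(4)\Rightarrow(1)$ in the \emph{opposite} direction: rather than encoding a function into a single set, it decomposes a nonnegative function $f$ on the cube into a sum of scaled indicator functions. Writing $f(a)/M$, where $M=\|f\|_{\ell^\infty}$, in binary with $N=\lceil d\log_2 n\rceil$ digits gives
\[
f=\sum_{i=0}^{N}\frac{M}{2^i}\mathbbm{1}_{A_i}+f',\qquad 0\le f'<2^{-N}M,
\]
so that the hypothesis $\|\mathbbm{1}_{A_i}\|_{\textup{U}^k}\le D\|\mathbbm{1}_{A_i}\|_{\ell^p}$ and the triangle inequality for $\|\cdot\|_{\textup{U}^k}$ yield
\[
\|f\|_{\textup{U}^k}\le D(3+d\log_2 n)\,\|f\|_{\ell^p}
\]
for every $f\colon\{0,\ldots,n-1\}^d\to[0,\infty)$. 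The logarithmic loss in $d$ is then removed by the tensor-power trick: apply this bound to $f=g^{\otimes d}$ for $g\colon\{0,\ldots,n-1\}\to[0,\infty)$, take $d$-th roots, and let $d\to\infty$.

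In short, the missing idea is that the passage from sets to functions should go through a dyadic level-set decomposition of $f$ (incurring a harmless polynomial-in-$d$ constant), followed by tensoring at the \emph{function} level to kill that constant. Once you replace your proposed $(3)\Rightarrow(1)$ by this $(4)\Rightarrow(1)$ argument, the cycle closes.
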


All ingredients for the proof of Proposition \ref{prop:product} will be borrowed from \cite{DGIM21}, even though the authors there rather discussed norms of multiple convolutions,
\[ \big\|\underbrace{f\ast f\ast \cdots \ast f}_{k\text{ times}}\big\|_{\ell^2}^2 \]
and the related $k$-additive energies $E_k$.
Another difference is that the proofs in \cite{DGIM21} use the Fourier transform, while we avoid the need to look at the functions on the Fourier side at all. This is important here and in the later text, since the higher Gowers norms $\|f\|_{\textup{U}^k}$, $k\geq3$, are not expressible merely in terms of the size of the Fourier transform of $f$. 

\smallskip
Let $t_{k,n}$ denote the smallest number $t>0$ such that \eqref{eq:GowersA} holds for every positive integer $d$ and every subset $A$ of \eqref{eq:thecube}.
The smallest such number really exists by Proposition \ref{prop:product}, since part \ref{it1} is structurally just an inequality for $n$ non-negative numbers and it can be normalized to yield a compact condition.
By the same proposition we then also know that
\begin{equation}\label{eq:pknvstkn}
p_{k,n}t_{k,n}=2^k,
\end{equation}
where $p_{k,n}$ is the largest $p>0$ such that \eqref{eq:GowersLp} holds in any dimension $d$ and for every function $f$ supported in \eqref{eq:thecube}.

Theorem \ref{thm:binary} can now be reformulated simply as
\[ t_{k,2} = \log_2 (2k+2) \]
for $k\geq2$.
It is not very likely that explicit expressions for $t_{k,n}$ can be computed when $n\geq 3$.
Namely, a similar computation to the one in \cite[Subsection 4.3]{DGIM21} shows that the exponent $t_{k,3}$ associated with the ternary cube $\{0,1,2\}^d$ is the smallest $t>0$ such that
\begin{align}
\max_{\substack{x,y,z\in[0,\infty)\\ x+y+z=1}} \bigl( & x^t + y^t + z^t + 2k x^{t/2} y^{t/2} + 2k y^{t/2} z^{t/2} \nonumber \\[-3mm]
& + 2k x^{t/2} z^{t/2} + 2k(k-1) x^{t/4} y^{t/2} z^{t/4} \bigr) = 1. \label{eq:tern}
\end{align}
Already the number $t_{2,3}$ does not seem to be a ``nice'' explicit number. As remarked in \cite[Subsection 4.3]{DGIM21} its trivial bounds are
\[ 2.68014\ldots = \log_3 19 \leq t_{2,3} \leq 3, \]
where it is also shown that
\[ t_{2,3} \geq 2 \log_2 2.5664 = 2.71949\ldots. \]
Numerical computation in Mathematica \cite{Mathematica} based on \eqref{eq:tern} and formula \eqref{eq:pknvstkn} give 
\begin{align*}
t_{2,3} & = 2.7207109973\ldots, \\
p_{2,3} & = 1.4702039297\ldots.
\end{align*}
Inverse Symbolic Calculator \cite{ISC} does not recognize these two as any of the numbers appearing naturally elsewhere. 
Still, as a consequence of a general Theorem \ref{thm:general2} below, we will have a quite precise asymptotics of $t_{k,3}$ for large $k$, namely
\begin{equation}\label{eq:tk3asy}
t_{k,3} = \frac{4}{3}\log_2 k + \frac{2}{3} + o^{k\to\infty}(1).
\end{equation}

\smallskip
For the previous reason, the best one can hope for general $k\geq2$ and $n\geq3$ is to study the asymptotic behavior of $t_{k,n}$ as either $n\to\infty$ or $k\to\infty$. Let us first fix $k$ and study the asymptotics in $n$.
As we have already mentioned, the numbers $t_{2,n}$ were studied before, since $P_2$ coincides with the ordinary additive energy \eqref{eq:addenergy}.
The bound
\[ 3 - \frac{\log_2 3 - 1}{\log_2 n} \leq t_{2,n} \leq 3 \]
from \cite[Proposition 7]{DGIM21} was improved by Shao \cite{Sha24} to
\begin{equation}\label{eq:Shao}
3 - \big(1 + o^{n\to\infty}(1)\big) \frac{3\log_2 3 - 4}{2\log_2 n} \leq t_{2,n} \leq 3 - \frac{c}{\log_2 n}
\end{equation} 
for some constant $c>0$.
Shao also conjectured that the leftmost expression in \eqref{eq:Shao} is the correct asymptotics of $t_{2,n}$ as $n\to\infty$.
Here we generalize Shao's results to $t_{k,n}$ for $k\geq3$.

\begin{theorem}\label{thm:general}
There exists a constant $c\in(0,\infty)$ such that for every integer $k\geq 2$ we have
\[ k+1 - \big(1 + o_{k}^{n\to\infty}(1)\big) \frac{(k+1)\log_2(k+1)-2k}{2\log_2 n} \leq t_{k,n} \leq k+1 - \frac{c}{\log_2 n}. \]
\end{theorem}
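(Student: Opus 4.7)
The plan is to invoke Proposition~\ref{prop:product} to convert both bounds into statements about non-negative functions $f\colon\Z\to[0,\infty)$ supported in $\{0,1,\ldots,n-1\}$, and to translate between the critical exponents via the identity $p_{k,n}t_{k,n}=2^k$. With this reduction, the task splits into (i) exhibiting a nearly extremal test function (for the lower bound) and (ii) establishing a slightly strengthened critical inequality for $\|f\|_{\textup{U}^k}$ on intervals of length $n$ (for the upper bound).

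For the upper bound, I would argue by induction on $k$, taking the base case $k=2$ from Shao's estimate~\eqref{eq:Shao}. At the inductive step, the plan is to use the identity
\[
\|f\|_{\textup{U}^{k+1}}^{2^{k+1}} \;=\; \sum_{h\in\Z} \|\Delta_h f\|_{\textup{U}^k}^{2^k},
\qquad \Delta_h f(x):=f(x)\,\overline{f(x+h)},
\]
and to observe that each $\Delta_h f$ is supported in an interval of length at most $n-|h|$, so that the inductive hypothesis yields $\|\Delta_h f\|_{\textup{U}^k}\leq\|\Delta_h f\|_{\ell^{p_{k,n-|h|}}}$. A H\"older-type summation in $h$ then reassembles an inequality of the form $\|f\|_{\textup{U}^{k+1}}\leq\|f\|_{\ell^{p}}$ for some $p$ strictly above $2^{k+1}/(k+2)$. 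The principal difficulty is quantitative: one has to keep the constant $c>0$ independent of $k$, which is not automatic because a naive H\"older step erodes the improvement at every inductive level.

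For the lower bound, my plan is to produce an explicit test function on $\{0,\ldots,n-1\}$. The naive choice $f=\mathbbm{1}_{\{0,1,\ldots,n-1\}}$ together with the asymptotic $P_k(\{0,\ldots,n-1\})=\bigl(2^k/(k+1)!\bigr)\,n^{k+1}+O(n^k)$ gives only
\[ t_{k,n}\;\geq\; k+1 - \frac{\log_2\bigl((k+1)!/2^k\bigr)}{\log_2 n}, \]
and the constant $\log_2((k+1)!/2^k)$ strictly exceeds the target $X:=((k+1)\log_2(k+1)-2k)/2$ for every $k\geq 2$. To bring the constant down to $X$, I would discretize a well-chosen non-negative profile $w\colon[0,1]\to[0,\infty)$ by setting $f(j):=w(j/n)$; the asymptotic relations $\|f\|_{\ell^p}^p\sim n\int_0^1 w^p$ and $\|f\|_{\textup{U}^k}^{2^k}\sim n^{k+1}\|w\|_{\textup{U}^k(\R)}^{2^k}$ reduce the problem to making the ratio $\|w\|_{\textup{U}^k(\R)}/\|w\|_{L^{p^\ast}(\R)}$ as large as possible, with $p^\ast=2^k/(k+1)$. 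The sharp value $(k+1)^{(k+1)/2}/2^k$, whose base-$2$ logarithm equals $X$, should arise from a near-extremizer of the continuous critical Gowers--Lebesgue inequality. Constructing, or adequately approximating, this extremizer and verifying that it produces the constant $X$ is the main analytic obstacle.
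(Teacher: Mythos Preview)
Your lower-bound plan is essentially what the paper does: discretize a near-extremizer of the sharp continuous inequality $\|w\|_{\textup{U}^k(\R)}\le C_k\|w\|_{\textup{L}^{2^k/(k+1)}(\R)}$ of Eisner--Tao, where $C_k=2^{k/2^k}/(k+1)^{(k+1)/2^{k+1}}$ and Gaussians are extremizers. Concretely, the paper takes $f_{M,n}(m)=\exp\!\bigl(-4M^2(m/n-1/2)^2\bigr)$ on $\{0,\ldots,n-1\}$, passes to the Riemann-sum limit to compare with $\|g\mathbbm{1}_{[-M,M]}\|_{\textup{U}^k(\R)}\big/\|g\mathbbm{1}_{[-M,M]}\|_{\textup{L}^{2^k/(k+1)}(\R)}$ for $g(x)=e^{-x^2}$, and then sends $M\to\infty$; this yields $\liminf_n (t_{k,n}-k-1)\log_2 n\ge 2^k\log_2 C_k=-X$. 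Since the true extremizer is not compactly supported, your single fixed profile $w$ on $[0,1]$ must likewise be replaced by a one-parameter family concentrating toward a Gaussian; otherwise you will not reach the constant $X$.

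For the upper bound your inductive skeleton is correct, but the ``erosion'' you flag as the principal difficulty is precisely the missing idea, and your proposal does not supply it. The paper resolves it not by a H\"older step but by Young's convolution inequality, and it applies the level-$k$ hypothesis on the \emph{same} interval $\{0,\ldots,n-1\}$ for every $h$ (your use of $p_{k,n-|h|}$ is an unnecessary complication that only makes the summation harder). With $g:=|f|^{2^k/t}$ one obtains
\[
\|f\|_{\textup{U}^{k+1}}^{2^{k+1}}
\le\sum_{h}\big\|g(\cdot+h)\,g\big\|_{\ell^1}^{t}
=\|g\ast\widetilde{g}\|_{\ell^t}^{t}
\le\|g\|_{\ell^{2t/(t+1)}}^{2t}
=\big\||f|^{2^{k+1}/(t+1)}\big\|_{\ell^1}^{\,t+1},
\]
that is, $t_{k+1,n}\le t_{k,n}+1$ with \emph{no} loss whatsoever. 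Iterating from Shao's $t_{2,n}\le 3-c/\log_2 n$ therefore gives $t_{k,n}\le k+1-c/\log_2 n$ with the very same constant $c$ for every $k\ge2$.
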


Note that the lower bound from Theorem \ref{thm:general} specializes to the one in \eqref{eq:Shao} when $k=2$ and one can again speculate that it is optimal. The constant $c$ in the upper bound is the same one from \eqref{eq:Shao}, as we are, in fact, applying Shao's highly nontrivial bound as a black box. One could, in fact, repeat the proof from \cite{Sha24} to obtain constants $c_k$ that strictly increase with $k$. However, we found it difficult to track down the actual growth and it is unlikely that it could match the growth of $(1/2)(k+1)\log_2(k+1)-k$ from the lower bound.

\smallskip
In this paper we also have the opportunity to fix $n\geq 2$ and study the asymptotics of $t_{k,n}$ as $k\to\infty$.
Here is where we can give a quite definite result; we find it the most substantial contribution of this paper. 
Let
\begin{equation}\label{eq:binomentr}
H_m := - \sum_{j=0}^{m} \frac{\binom{m}{j}}{2^m} \log_2 \frac{\binom{m}{j}}{2^m} 
\end{equation}
denote the entropy of the symmetric binomial distribution $B(m,1/2)$; see Section \ref{sec:notation} for the more general definition.

\begin{theorem}\label{thm:general2}
For every integer $n\geq 2$ we have
\[ t_{k,n} = \frac{(n-1)\log_2 (2k) - \log_2 (n-1)!}{H_{n-1}} + o_{n}^{k\to\infty}(1). \]
\end{theorem}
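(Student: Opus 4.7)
The plan is to use Proposition~\ref{prop:product} to reduce the estimate to a one-dimensional optimization, then match an explicit lower bound from the binomial distribution with a counting upper bound driven by Shannon entropy.

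After the substitution $f=g^{1/p}$ with $p=2^k/t$, Proposition~\ref{prop:product} identifies $t_{k,n}$ with the smallest $t>0$ for which
\[ \Psi_g(t):=\sum_{(a,h)\in\Phi}\prod_{\epsilon\in\{0,1\}^k}g(a+\epsilon\cdot h)^{t/2^k}\leq1 \]
holds for every probability distribution $g$ on $\{0,1,\ldots,n-1\}$, where $\Phi\subseteq\Z^{k+1}$ collects the tuples for which every value $a+\epsilon\cdot h$ lies in $\{0,1,\ldots,n-1\}$. Letting $\mu_\phi$ denote the distribution of $a+\epsilon\cdot h$ when $\epsilon$ is uniform on $\{0,1\}^k$, each summand equals $2^{-t(H(\mu_\phi)+D(\mu_\phi\|g))}$, where $H$ and $D$ are the Shannon entropy and Kullback--Leibler divergence in base~$2$.

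For the lower bound, I would take $g_0=B(n-1,1/2)$, i.e.\ $g_0(j)=\binom{n-1}{j}2^{-(n-1)}$, and the subfamily $\Phi_0\subset\Phi$ of maps
\[ \phi(\epsilon)=q+\sum_{j=1}^{n-1}s_j\epsilon_{i_j},\qquad \{i_1<\cdots<i_{n-1}\}\subseteq[k],\ s\in\{\pm1\}^{n-1},\ q=|\{j:s_j=-1\}|. \]
Setting $X_j:=\epsilon_{i_j}$ if $s_j=+1$ and $X_j:=1-\epsilon_{i_j}$ otherwise, one has $\phi=X_1+\cdots+X_{n-1}\sim B(n-1,1/2)$, hence $\mu_\phi=g_0$ and each $\phi\in\Phi_0$ contributes $2^{-tH_{n-1}}$. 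Since $|\Phi_0|=\binom{k}{n-1}\cdot2^{n-1}$, the necessary condition $\Psi_{g_0}(t)\geq1$ forces, by Stirling,
\[ tH_{n-1}\leq\log_2\binom{k}{n-1}+(n-1)=(n-1)\log_2(2k)-\log_2(n-1)!+o^{k\to\infty}(1), \]
which yields the claimed lower bound on $t_{k,n}$.

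For the upper bound at $t$ exceeding the critical value by a fixed $\varepsilon>0$, I would group $(a,h)\in\Phi$ by $m:=|\{i:h_i\ne0\}|\leq n-1$. A map with $m$ nonzero coefficients of common absolute value has $\mu_\phi$ a translated/rescaled $B(m,1/2)$ of entropy exactly $H_m$, and the subadditivity of entropy for sums of independent random variables gives $H(\mu_\phi)\geq H_m$ in general. Combining the Gibbs bound $\prod_\epsilon g(a+\epsilon\cdot h)^{t/2^k}\leq2^{-tH(\mu_\phi)}$ with the type count $|\{(a,h)\in\Phi:|\{i:h_i\ne0\}|=m\}|\leq\binom{k}{m}C_{n,m}$, where $C_{n,m}$ depends only on $n$, reduces the task to estimating $\sum_{m=0}^{n-1}\binom{k}{m}C_{n,m}2^{-tH_m}$. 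Strict concavity of $m\mapsto H_m$ (together with $H_0=0$) makes $H_m/m$ strictly decreasing, so each $m<n-1$ summand is $O(k^{m-(n-1)H_m/H_{n-1}+o(1)})=o(1)$, while the $m=n-1$ term tends to $2^{-\varepsilon H_{n-1}}<1$. The main obstacle is that the Gibbs step overestimates the $m=0$ (constant-$\phi$) contribution as $n$ rather than the true value $\sum_vg(v)^t\leq1$; one resolves this by isolating constant $\phi$ and retaining the KL factor $2^{-tD(\mu_\phi\|g)}$ for the rest, which ensures the non-constant contributions decay as fractional powers of the small coordinates of $g$ when $g$ concentrates near some $\delta_v$, while for $g$ bounded away from every point mass the constant sum $\sum_vg(v)^t$ is itself $o(1)$; interpolating the two regimes delivers $\Psi_g(t)\leq1$ uniformly.
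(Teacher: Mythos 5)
Your overall architecture matches the paper's: reduce via Proposition~\ref{prop:product} to a one-variable inequality, test against the binomial probability vector $g_0(j)=\binom{n-1}{j}2^{-(n-1)}$ and the family $\Phi_0$ for the lower bound, and for the upper bound group by the number $m$ of nonzero $h_i$, use the Gibbs step $\prod_\epsilon g(a+\epsilon\cdot h)^{t/2^k}\le 2^{-t\textup{H}(\mu_\phi)}$, and split into a ``spread out'' regime and a ``near point mass'' regime. The lower bound sketch is essentially the paper's verbatim.

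There is, however, a genuine gap in the upper bound. You assert that ``the subadditivity of entropy for sums of independent random variables gives $\textup{H}(\mu_\phi)\ge H_m$ in general.'' Subadditivity, $\textup{H}(Y_1+\cdots+Y_m)\le\sum_i\textup{H}(Y_i)$, goes in the \emph{opposite} direction, and the standard conditioning bound $\textup{H}(Y_1+\cdots+Y_m)\ge\max_i\textup{H}(Y_i)$ only yields $\textup{H}(\mu_\phi)\ge1$, which is far too weak. The inequality $\textup{H}(h_1X_1+\cdots+h_mX_m)\ge H_m$ for arbitrary nonzero integers $h_i$ is precisely Proposition~\ref{prop:niceentropy} of the paper, an ``entropic Littlewood--Offord'' statement, and it is the technical heart of the whole theorem. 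The paper proves it by first establishing (Lemma~\ref{lm:majorization}) that the decreasing rearrangement of the probability mass function of $B(m,1/2)$ majorizes that of $h_1X_1+\cdots+h_mX_m$ — using Erd\H{o}s' generalization of Sperner's theorem on unions of antichains — and then applying Karamata's inequality with the strictly concave map $x\mapsto-x\log_2 x$. Nothing close to this is present in your argument; note also that an inductive ``adding one more variable preserves the comparison'' approach fails, since $\textup{H}(Y)\ge\textup{H}(Z)$ for independent $Y,Z,W$ does not imply $\textup{H}(Y+W)\ge\textup{H}(Z+W)$.

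A smaller issue: you invoke ``strict concavity of $m\mapsto H_m$'' to conclude $H_m/m$ strictly decreasing. Concavity of $H_m$ is not proved in the paper and is not an obvious fact; the paper instead derives the (weaker, but sufficient) monotonicity of $H_m/m$ directly from the sharp two-sided estimates $\tfrac12\log_2\tfrac{e\pi m}{2}-\tfrac1{4m}<H_m<\tfrac12\log_2\tfrac{e\pi m}{2}+\tfrac1{10m}$ of Adell--Lekuona--Yu (Lemma~\ref{lm:entropy2} and Corollary~\ref{cor:decreasing}). You would need to either prove concavity or substitute such explicit bounds. Finally, your closing paragraph (isolating constant $\phi$, retaining a KL term, ``interpolating the two regimes'') gestures at the paper's two-case split around the threshold $1-\theta$ but does not actually carry out the near-point-mass case, where the paper bounds the remaining sum by $\bigl(\sum_j g(j)\bigr)^2=1$ after extracting two genuinely small coordinates from each non-constant term; as written, your sketch leaves that case unverified.
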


As a consequence, $t_{k,n}$ is asymptotically equal to
\[ \frac{n-1}{H_{n-1}}\log_2 k \]
as $k\to\infty$.
Several values of the leading coefficient $(n-1)/H_{n-1}$ are listed in Table \ref{tab:values}.
We also mention that it is asymptotically equal to $2n/\log_2 n$ as $n\to\infty$; this is a well-known fact, which also follows from sharper estimates \eqref{eq:sharpHmest} in Section \ref{sec:proofofkthm}. 
In particular, the constant $t_{k,3}$ relevant for the ternary cube is asymptotically equal to $(4/3)\log_2 k$ and Theorem \ref{thm:general2} also gives the more precise formula \eqref{eq:tk3asy}.
Somewhat surprisingly, the proof of Theorem \ref{thm:general2} will require subtle estimates for the Shannon entropy of certain independent sums of discrete random variables, which will be discussed in Subsection \ref{subsec:entropy}.

\begin{table}[t]
\begin{center}
\begin{tabular}{|r||l|l|}
\hline
$n$ & $(n-1)/H_{n-1}$ & numerical value \\ 
\hline\hline
$2$ & $1$ & $1$ \\  
\hline
$3$ & $4/3$ & $1.3333333333\ldots$ \\
\hline
$4$ & $4/(4 - \log_2 3)$ & $1.6562889815\ldots$ \\
\hline
$5$ & $32/(21 - 3\log_2 3)$ & $1.9698232317\ldots$ \\
\hline
$6$ & $16/(14 - 3\log_2 5)$ & $2.2745961522\ldots$ \\
\hline 
\end{tabular}
\end{center}
\caption{Several values of $(n-1)/H_{n-1}$.}
\label{tab:values}
\end{table}


\section{Notation and preliminaries}
\label{sec:notation}
Cardinality of a finite set $S$ is simply written as $|S|$, just as we already did in the introduction.
We write $\N$ for the set of positive integers, $\{1,2,3,\ldots\}$. Other number sets, namely $\Z$, $\R$, and $\mathbb{C}$, have their usual meanings.
Logarithms with different bases will be used in the text; by far the most commonly used will be $2$. The logarithm with base $e$ will be denoted $\ln$.

If $U$ is a fixed (universal) set, then we write $\mathcal{P}(U)$ for the collection of all of its subsets. We say that two sets $A, B \in\mathcal{P}(U)$ are \emph{comparable} if one of them is a subset of the other. Given a family of subsets $\mathcal{A} \subseteq \mathcal{P}(U)$, we say that $\mathcal{A}$ is a \emph{chain} if any two sets in $\mathcal{A}$ are comparable and that $\mathcal{A}$ is an \emph{antichain} if no two sets in $\mathcal{A}$ are comparable.

Suppose that
\[ F(m,a,b,\ldots)\in\R \quad\text{and}\quad G(m,a,b,\ldots)\in(0,\infty) \]
are quantities defined for sufficiently large positive integers $m$ that possibly also depend on certain parameters $a,b,\ldots$.
We write
\[ F(m,a,b,\ldots) = O_{a,b,\ldots}^{m\to\infty}\big(G(m,a,b,\ldots)\big) \]
if
\[ \limsup_{m\to\infty} \frac{|F(m,a,b,\ldots)|}{G(m,a,b,\ldots)} < \infty \]
for every possible $a,b,\ldots$, while 
\[ F(m,a,b,\ldots) = o_{a,b,\ldots}^{m\to\infty}\big(G(m,a,b,\ldots)\big) \]
means that
\[ \lim_{m\to\infty} \frac{F(m,a,b,\ldots)}{G(m,a,b,\ldots)} = 0 \]
holds for every fixed choice of $a,b,\ldots$.
We also simply write 
\[ O_{a,b,\ldots}^{m\to\infty}\big(G(m,a,b,\ldots)\big) \quad\text{and}\quad o_{a,b,\ldots}^{m\to\infty}\big(G(m,a,b,\ldots)\big) \]
in place of any particular expression $F(m,a,b,\ldots)$ with the above property.
It is understood that these definitions are uniform over any other objects that do not appear in the subscript of the $O$ or $o$ notation.
Next, we say that sequences $(F(m))_m$ and $(G(m))_m$ are asymptitocally equal if
\[ \lim_{m\to\infty} \frac{F(m)}{G(m)} = 1, \]
i.e.,
\[ F(m) = (1+o^{m\to\infty}(1)) \,G(m). \]

The \emph{$\ell^p$ norm} of a function $f\colon\mathbb{Z}^d\to\mathbb{C}$ is defined as
\[ \|f\|_{\ell^p} := \Big(\sum_{x\in\mathbb{Z}^d} |f(x)|^p \Big)^{1/p} \]
for $p\in[1,\infty)$, while the \emph{$\ell^\infty$ norm} is simply
\[ \|f\|_{\ell^\infty} := \sup_{x\in\mathbb{Z}^d} |f(x)|. \]
\emph{Convolution} of functions $f,g\colon\Z^d\to\mathbb{C}$ is another complex function on $\Z^d$ defined as
\[ (f\ast g)(x) := \sum_{y\in\Z^d} f(x-y) g(y) \]
for $x\in\Z^d$.
\emph{Young's inequality for convolution} now reads
\begin{equation}\label{eq:Youngineq}
\|f\ast g\|_{\ell^r} \leq \|f\|_{\ell^p} \|g\|_{\ell^q},
\end{equation}
which holds when $p,q,r\in[1,\infty]$ are such that $1/p+1/q=1+1/r$.
We also define $\widetilde{f}$ to be the reflection of $f$, namely
\[ \widetilde{f}(x) := f(-x) \]
for every $x\in\Z^d$, so that $\|\widetilde{f}\|_{\ell^p}=\|f\|_{\ell^p}$ for all $p\in[1,\infty]$.

The \emph{Gowers ``inner product''} and its properties will play a crucial role in the proofs given in Sections \ref{sec:binary} and \ref{sec:product}. It is defined to be
\begin{align*} 
& \big\langle (f_{\epsilon_1,\ldots,\epsilon_k})_{(\epsilon_1,\ldots,\epsilon_k)\in\{0,1\}^k} \big\rangle_{\textup{U}^k} \\
& := \sum_{a,h_1,\ldots,h_k} \prod_{(\epsilon_1,\ldots,\epsilon_k)\in\{0,1\}^k} \mathcal{C}^{\epsilon_1+\cdots+\epsilon_k} f_{\epsilon_1,\ldots,\epsilon_k}(a+\epsilon_1 h_1+\cdots+\epsilon_k h_k),
\end{align*}
where $(f_{\epsilon_1,\ldots,\epsilon_k})$ is now a $2^k$-tuple of complex functions on $\Z^d$ such that the above multiple series converges absolutely.
It satisfies the so-called \emph{Gowers--Cauchy--Schwarz inequality}:
\[ \big| \big\langle (f_{\epsilon_1,\ldots,\epsilon_k})_{(\epsilon_1,\ldots,\epsilon_k)\in\{0,1\}^k} \big\rangle_{\textup{U}^k} \big|
\leq \prod_{(\epsilon_1,\ldots,\epsilon_k)\in\{0,1\}^k} \|f_{\epsilon_1,\ldots,\epsilon_k}\|_{\textup{U}^k}; \]
see \cite[Lemma 3.8]{Gow01}.
One if its consequences is that the Gowers norms satisfy the triangle inequality,
\[ \|f_1 + f_2\|_{\textup{U}^k} \leq \|f_1\|_{\textup{U}^k} + \|f_2\|_{\textup{U}^k}; \]
see \cite[Lemma 3.9]{Gow01}.

It will also be convenient to recall the classical inequality of Karamata \cite{Kar32}.
Suppose that we are given two $N$-tuples 
\[ \mathbf{x} = (x_1,x_2,\ldots,x_N),\quad \mathbf{y} = (y_1,y_2,\ldots,y_N) \]
of numbers from an interval $I\subseteq\R$ sorted in the descending order, i.e., 
\[ x_1\geq x_2\geq \cdots \geq x_N, \quad y_1\geq y_2\geq \cdots \geq y_N. \]
We say that $\mathbf{x}$ \emph{majorizes} $\mathbf{y}$ if
\[ \sum_{j=1}^{N} x_j = \sum_{j=1}^{N} y_j \]
and
\[ \sum_{j=1}^{n} x_j \geq \sum_{j=1}^{n} y_j \]
for all $1\leq n\leq N-1$.
Karamata proved that, for a strictly convex function $\psi\colon I\to\R$, we have
\[ \sum_{j=1}^{N} \psi(x_j) \geq \sum_{j=1}^{N} \psi(y_j) \]
with equality only if $\mathbf{x}=\mathbf{y}$; see the details in \cite[Theorem 1, Note 2]{kadelburg-djukic-lukic-matic}.
The inequality is reversed if $\psi$ is strictly concave.

Given an integer-valued random variable $X$ on a probability space $(\Omega,\mathcal{F},\mathbb{P})$ we define its \emph{probability mass function} to be the function
\[ p_X \colon \mathbb{Z} \to [0,1], \quad p_X(z) := \mathbb{P}(X=z).\]
We will only work with distributions that are supported on finitely many integers. In this case the \emph{decreasing rearrangement} of $p_X$ is defined to be the decreasing (i.e., non-increasing) function $p_X^{\downarrow} \colon \mathbb{N} \to [0,1]$ taking the same multiset of values as $p_X$.

The notion of the Shannon entropy will be needed in Section \ref{sec:proofofkthm}.
If a random variable $X$ has distribution supported on a finite subset of $\Z$, then its \emph{Shannon entropy} (or simply just \emph{entropy}) is defined to be the number
\begin{equation}\label{eq:entrdef}
\textup{H}(X) := -\sum_{z\in\Z} p_X(z) \log_2 p_X(z).
\end{equation}
Here we interpret $0\log_2 0$ as $0$. 
In the particular case of a random variable $X$ distributed
\[ X \sim \begin{pmatrix}
0 & 1 & 2 & \cdots & n-1 \\
q_0 & q_1 & q_2 & \cdots & q_{n-1}
\end{pmatrix} \]
its entropy will sometimes also be written as
\[ \textup{H}(q_0,\ldots,q_{n-1}). \]
Recall that, in accordance with formula \eqref{eq:binomentr}, the entropy of the symmetric binomial distribution with $m$ trials, namely $B(m,1/2)$, is written simply as $H_{m}$.


\section{Proof of Proposition \ref{prop:product}}
\label{sec:product}
Recall that $p$ and $t$ are now arbitrary positive numbers related by
\[ pt=2^k. \]

\emph{Proof of \ref{it1}$\implies$\ref{it2}}.
The assumption \ref{it1} is that for every $g\colon\Z\to[0,\infty)$ supported in $\{0,1,\ldots,n-1\}$ one has
\begin{equation}\label{eq:auxpower3}
\sum_{b,l_1,\ldots,l_k\in\Z} \prod_{(\epsilon_1,\ldots,\epsilon_k)\in\{0,1\}^k} g(b+\epsilon_1 l_1+\cdots+\epsilon_k l_k)
\leq \Big( \sum_{b=0}^{n-1} g(b)^p \Big)^{2^k/p}.
\end{equation}
We prove the claim \ref{it2} by induction on $d$. The base case $d=0$ is trivial again, since all functions on $\Z^0$ are constants.
Now take $d\in\N$ and a complex function $f$ on $\Z^d$ supported in \eqref{eq:thecube}. For each $b\in\Z$ define
\[ f_b\colon\Z^{d-1}\to\mathbb{C}, \quad f_b(a) := f(a,b) \]
for $a\in\Z^{d-1}$, and use the induction hypothesis applied to each of these functions to obtain
\begin{equation}\label{eq:prophyp}
\|f_b\|_{\textup{U}^k} \leq \|f_b\|_{\ell^p}.
\end{equation}
Note that $f_b$ is identically zero unless $0\leq b\leq n-1$.
By the definition of the Gowers norm we have
\begin{align*} 
\|f\|_{\textup{U}^k}^{2^k}
& = \sum_{\substack{a,h_1,\ldots,h_k\in\Z^{d-1}\\ b,l_1,\ldots,l_k\in\Z}} \prod_{(\epsilon_1,\ldots,\epsilon_k)\in\{0,1\}^k} \mathcal{C}^{\epsilon_1+\cdots+\epsilon_k} f_{b+\epsilon_1 l_1+\cdots+\epsilon_k l_k}(a+\epsilon_1 h_1+\cdots+\epsilon_k h_k) \\
& = \sum_{b,l_1,\ldots,l_k\in\Z} \big\langle (f_{b+\epsilon_1 l_1+\cdots+\epsilon_k l_k})_{(\epsilon_1,\ldots,\epsilon_k)\in\{0,1\}^k} \big\rangle_{\textup{U}^k},
\end{align*}
so the Gowers--Cauchy--Schwarz inequality followed by \eqref{eq:prophyp} yields
\begin{align*} 
\|f\|_{\textup{U}^k}^{2^k}
& \leq \sum_{b,l_1,\ldots,l_k\in\Z} \prod_{(\epsilon_1,\ldots,\epsilon_k)\in\{0,1\}^k} \| f_{b+\epsilon_1 l_1+\cdots+\epsilon_k l_k} \|_{\textup{U}^k} \\
& \leq \sum_{b,l_1,\ldots,l_k\in\Z} \prod_{(\epsilon_1,\ldots,\epsilon_k)\in\{0,1\}^k} \| f_{b+\epsilon_1 l_1+\cdots+\epsilon_k l_k} \|_{\ell^p}.
\end{align*}
It remains to apply \eqref{eq:auxpower3} with
\[ g(b) := \|f_b\|_{\ell^p} \]
to conclude
\[ \|f\|_{\textup{U}^k}^{2^k} \leq \Big( \sum_{b=0}^{n-1} \|f_b\|_{\ell^p}^p \Big)^{2^k/p} = \|f\|_{\ell^p}^{2^k}, \]
which proves \ref{it2}.

\smallskip
\emph{Proof of \ref{it2}$\implies$\ref{it3}}.
This is obvious by taking $f=\mathbbm{1}_A$, observing 
\[ \|\mathbbm{1}_A\|_{\ell^p}^{2^k} = |A|^{2^k/p} = |A|^t, \]
and recalling the definition of $P_k$.

\smallskip
\emph{Proof of \ref{it3}$\implies$\ref{it4}}.
This is obvious by taking $C=1$.

\smallskip
\emph{Proof of \ref{it4}$\implies$\ref{it1}}.
The assumption \ref{it4} can be equivalently stated as: there exist a constant $D\in[1,\infty)$ such that 
\begin{equation}\label{eq:auxpower1}
\|\mathbbm{1}_A\|_{\textup{U}^k} \leq D |A|^{t/2^k} = D \|\mathbbm{1}_A\|_{\ell^p}
\end{equation}
for every $d\geq0$ and every $A\subseteq\{0,1,\ldots,n-1\}^d$.
The following idea is borrowed from \cite[Proof of Lemma 19]{DGIM21}.

We will first prove that for every $d\in\N$ and every function $f\colon\Z^d\to[0,\infty)$ with support in \eqref{eq:thecube} one has
\begin{equation}\label{eq:auxpower2}
\|f\|_{\textup{U}^k} \leq D (3+d\log_2 n) \|f\|_{\ell^p}.
\end{equation}
Denote $M:=\|f\|_{\ell^\infty}$, $N:=\lceil d\log_2 n\rceil$, and write
\[ \frac{f(a)}{M} = \sum_{i=0}^{N} \frac{\beta_i(a)}{2^i} + \frac{f'(a)}{M} \]
for every $a\in\{0,1,\ldots,n-1\}^d$ and some $\beta_i(a)\in\{0,1\}$, $f'(a)\in[0,2^{-N}M)$. 
For each index $i$ define the set
\[ A_i := \{ a \,:\, \beta_i(a)=1 \} \]
and the function
\[ f_i := \frac{M}{2^i} \mathbbm{1}_{A_i} = \frac{M}{2^i} \beta_i. \]
An application of the estimate \eqref{eq:auxpower1} to $A_i$ yields
\[ \|f_i\|_{\textup{U}^k} = \frac{M}{2^i} \|\mathbbm{1}_{A_i}\|_{\textup{U}^k} \leq D \frac{M}{2^i} \|\mathbbm{1}_{A_i}\|_{\ell^p} = D \|f_i\|_{\ell^p}, \]
while $f'$ is trivially controlled as
\[ \|f'\|_{\textup{U}^k} \leq \frac{M}{2^N} \Big( \sum_{a,h_1,\ldots,h_k\in\{0,\ldots,n-1\}^d} 1 \Big)^{1/2^k}
\leq \frac{M}{2^N} (n^d)^{(k+1)/2^k} \leq M (n^d)^{(k+1)/2^k-1} \leq M. \]
Thus, the triangle inequality for the Gowers norm gives
\[ \|f\|_{\textup{U}^k}
= \Big\| \sum_{i=0}^{N} f_i + f' \Big\|_{\textup{U}^k}
\leq \sum_{i=0}^{N} \|f_i\|_{\textup{U}^k} + \|f'\|_{\textup{U}^k}
\leq D \Big( \sum_{i=0}^{N} \|f_i\|_{\ell^p} + \|f\|_{\ell^\infty} \Big). \]
Since $0\leq f_i\leq f$, we clearly have $\|f_i\|_{\ell^p}\leq \|f\|_{\ell^p}$, which concludes
\[ \|f\|_{\textup{U}^k} \leq D (N+2) \|f\|_{\ell^p} \]
and completes the proof of \eqref{eq:auxpower2}.

Finally, we use a tensoring trick to remove the constant. Take an arbitrary $g\colon\Z\to[0,\infty)$ supported in $\{0,1,\ldots,n-1\}$ and, for some $d\in\N$, define $f\colon\Z^d\to[0,\infty)$ to be the $d$-th tensor power of $g$, i.e., 
\[ f(a_1,a_2,\ldots,a_d) := g(a_1) g(a_2) \cdots g(a_d). \]
Then
\[ \|f\|_{\textup{U}^k} = \|g\|_{\textup{U}^k}^d \text{ and } \|f\|_{\ell^p} = \|g\|_{\ell^p}^d, \]
so taking the $d$-th roots of \eqref{eq:auxpower2} gives
\[ \|g\|_{\textup{U}^k} \leq \big( D (3+d\log_2 n) \big)^{1/d} \|g\|_{\ell^p}. \]
Letting $d\to\infty$ we obtain
\[ \|g\|_{\textup{U}^k} \leq \|g\|_{\ell^p} \]
and thus finalize the proof of \ref{it1}.


\section{Proof of Theorem \ref{thm:binary}}
\label{sec:binary}
Fix an integer $k\geq2$ throughout this section and set
\[ t = \log_2 (2k+2), \]
always remembering that $t$ depends on $k$. 
By Proposition \ref{prop:product}, i.e., by formula \eqref{eq:pknvstkn}, we only need to verify
\begin{equation}\label{eq:twopointineq}
\sum_{a,h_1,\ldots,h_k\in\Z} \prod_{(\epsilon_1,\ldots,\epsilon_k)\in\{0,1\}^k} f(a+\epsilon_1 h_1+\cdots+\epsilon_k h_k)
\leq \bigg( f(0)^{2^k/t} + f(1)^{2^k/t} \bigg)^t 
\end{equation}
for every $f\colon\Z\to[0,\infty)$ that vanishes outside $\{0,1\}$.
Substituting 
\[ x=f(0)^{2^k/t},\quad y=f(1)^{2^k/t} \]
we evaluate the left hand side of \eqref{eq:twopointineq} as follows.
\begin{itemize}
\item The term with $a=0$, $h_1=\cdots=h_k=0$ contributes to the sum with $f(0)^{2^k}=x^t$.
\item The term with $a=1$, $h_1=\cdots=h_k=0$ contributes to the sum with $f(1)^{2^k}=y^t$.
\item Terms with $a=0$, $h_{j_0}=1$ for some index $j_0$, $h_{j}=0$ for every index $j\neq j_0$ contribute to the sum with $k f(0)^{2^{k-1}} f(1)^{2^{k-1}} = k x^{t/2} y^{t/2}$.
\item Terms with $a=1$, $h_{j_0}=-1$ for some index $j_0$, $h_{j}=0$ for every index $j\neq j_0$ also contribute to the sum with $k f(0)^{2^{k-1}} f(1)^{2^{k-1}} = k x^{t/2} y^{t/2}$.
\end{itemize}
It remains to prove the generalization of an ``elementary inequality'' by Kane and Tao \cite[Lemma 8]{KT17} stated in Lemma \ref{lm:binary} below. It is similar to but different from \cite[Lemma 9]{DGIM21}, \cite[Lemma 5]{DGIM21}, and \cite[Theorem 1]{Kov23}.

\begin{lemma}\label{lm:binary}
For every $x,y\in[0,\infty)$ we have
\[ x^t + y^t + 2k x^{t/2} y^{t/2} \leq (x+y)^t. \]
\end{lemma}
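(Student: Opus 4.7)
The plan is to reduce the two-variable inequality to a one-variable shape analysis on a compact interval. By the homogeneity of both sides under $(x,y) \mapsto (\lambda x, \lambda y)$, I would normalize $x + y = 2$ and set $x = 1 - u$, $y = 1 + u$ for $u \in [0, 1]$ (the case $u < 0$ is handled by the $x \leftrightarrow y$ symmetry). Since $(xy)^{t/2} = (1-u^2)^{t/2}$ and $(x+y)^t = 2^t = 2k+2$, the desired bound becomes
\[
F(u) := (1-u)^t + (1+u)^t + 2k(1-u^2)^{t/2} \leq 2k + 2, \quad u \in [0,1],
\]
and a direct evaluation gives $F(0) = F(1) = 2k + 2$, so the task reduces to showing that these boundary values are the maximum.

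Next I would establish that $F$ is strictly decreasing on some $[0, u^\ast]$ and strictly increasing on $[u^\ast, 1]$ for a unique interior critical point $u^\ast$. The manifestly positive factor $t(1-u^2)^{t/2-1}$ can be pulled out of $F'(u)$, and after the substitution $\rho := (1+u)/(1-u)$, which is a monotone bijection of $(0,1)$ onto $(1,\infty)$, the equation $F'(u) = 0$ on $(0,1)$ becomes equivalent to
\[
\eta(\rho) := \rho^{t-1} - k\rho^{t/2} + k\rho^{t/2-1} = 1,
\]
with the sign of $F'(u)$ matching the sign of $\eta(\rho) - 1$.

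Since $\eta(1) = 1$ and $\eta(\rho) \to \infty$, the required shape of $F$ reduces to showing that $\eta$ is first strictly decreasing and then strictly increasing on $[1, \infty)$. Writing $\eta'(\rho) = \rho^{t/2-2}\mu(\rho)$ with $\mu(\rho) := (t-1)\rho^{t/2} - (kt/2)\rho + k(t/2-1)$, the key observation is that $\mu$ is strictly convex on $(0,\infty)$ (since $t/2 > 1$ for $k \geq 2$), has $\mu(1) = t - 1 - k = \log_2(k+1) - k$, which is strictly negative for every $k \geq 2$, and tends to $+\infty$. A strictly convex function that starts negative and diverges to $+\infty$ has exactly one zero on $[1, \infty)$, which forces $\eta$ to have the required unimodal shape. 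Combined with $F(0) = F(1) = 2k+2$, this yields $F \leq 2k+2$ on $[0, 1]$.

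The main obstacle will be the bookkeeping in the algebraic reduction of $F'(u) = 0$ to the clean equation $\eta(\rho) = 1$ under $\rho = (1+u)/(1-u)$; once that substitution is carried out, the strict convexity of $\mu$ together with the elementary inequality $\log_2(k+1) < k$ for $k \geq 2$ handles the critical-point analysis essentially automatically, and the boundary equalities at $u = 0, 1$ upgrade the shape information to the claimed sharp inequality.
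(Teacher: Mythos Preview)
Your argument is correct and takes a genuinely different route from the paper's. Both proofs reduce by homogeneity and symmetry to a one-variable problem, but the analyses diverge from there. The paper normalizes $x+y=1$, restricts to $[0,1/2]$, and splits into two subintervals at the point $x_0$ where $k(x(1-x))^{t/2-1}=1$: on $[0,x_0]$ it uses the crude bound $2k(x(1-x))^{t/2}\le 2x(1-x)$ to get $\varphi(x)\le (x+(1-x))^2=1$, while on $(x_0,1/2]$ it shows $\varphi'>0$ via the concavity of the auxiliary function $\psi(x)=x^{t-1}-(1-x)^{t-1}+1-2x$. The case $k=2$ is handled separately by citing earlier work. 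Your approach instead parametrizes $(x,y)=(1-u,1+u)$, observes $F(0)=F(1)=2k+2$, and establishes that $F$ has a single interior critical point by the substitution $\rho=(1+u)/(1-u)$ together with the strict convexity of $\mu(\rho)=(t-1)\rho^{t/2}-(kt/2)\rho+k(t/2-1)$ and the sign check $\mu(1)=t-1-k<0$. This gives a uniform treatment of all $k\ge 2$ with no case split, at the cost of the algebraic change of variables; the paper's argument avoids that substitution but needs the two-case analysis and an external reference for $k=2$. One small wording correction: the factor you actually want to divide $F'(u)$ by to land on $\eta(\rho)-1$ is $t(1-u)^{t-1}$ rather than $t(1-u^2)^{t/2-1}$, but this does not affect the logic.
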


\begin{proof}
The case $k=2$ is precisely \cite[Lemma 8]{KT17}, which has also been reproved several times in \cite{DGIM21,Kov23}.
Thus, we can assume that $k\geq3$, which also guarantees $t\geq3$.

The inequality is trivial for $x=y=0$. In general, both sides of the inequality are homogeneous of order $t$ in the variables $x$ and $y$, so it is sufficient to prove it when we also normalize $x+y=1$.
Define the function $\varphi\colon[0,1]\to[0,\infty)$ as
\[ \varphi(x) := x^t + (1-x)^t + 2k (x(1-x))^{t/2}. \]
Since $\varphi(1-x)=\varphi(x)$, we only need to show that
\begin{equation}\label{eq:proovingphi}
\varphi(x) \leq 1
\end{equation}
for every $x\in[0,1/2]$.
Let $x_0\in\langle 0,1/2\rangle$ be the unique solution $x=x_0$ to the equation 
\[ k (x(1-x))^{t/2-1} = 1, \]
which exists due to $k(1/4)^{t/2-1}>1$.
We prove \eqref{eq:proovingphi} separately on two sub-intervals of $[0,1/2]$.

\smallskip
\emph{Case 1: interval $[0,x_0]$.}
For every $0\leq x\leq x_0$ we have, by the definition of $x_0$,
\[ 2k (x(1-x))^{t/2} \leq 2x(1-x), \]
which implies
\[ \varphi(x) \leq x^2 + (1-x)^2 + 2x(1-x) = (x+1-x)^2 = 1 \]
and so confirms \eqref{eq:proovingphi}.

\smallskip
\emph{Case 2: interval $\langle x_0,1/2]$.}
Differentiating we get
\[ \varphi'(x) = t x^{t-1} - t (1-x)^{t-1} + k t (x(1-x))^{t/2-1} (1-2x). \]
Also denote $\psi\colon[0,1/2]\to\R$,
\[ \psi(x) := x^{t-1} - (1-x)^{t-1} + 1 - 2x. \]
Note that $\psi$ is concave due to
\[ \psi''(x) = (t-1)(t-2) \big( x^{t-3} - (1-x)^{t-3} \big) \leq 0. \]
Since, $\psi(0)=0=\psi(1/2)$, we also have that $\psi$ is non-negative on $[0,1/2]$.
Now, for every $x\in\langle x_0,1/2]$ we have
\[ k (x(1-x))^{t/2-1} > 1, \]
which implies
\[ \varphi'(x) > t \psi(x) \geq 0. \]
Consequently, $\varphi$ is increasing on the interval $\langle x_0,1/2]$, so
\[ \varphi\Big(\frac{1}{2}\Big) = 2^{1-t} + 2k \cdot 2^{-t} = 1 \]
guarantees that \eqref{eq:proovingphi} holds on that interval too.
\end{proof}


\section{Proof of Theorem \ref{thm:general}}
We separately prove the lower bound
\begin{equation}\label{eq:gen1lower} 
\liminf_{n\to\infty} \bigl((t_{k,n}-k-1) \log_2 n\bigr) 
\geq -\frac{k+1}{2} \log_2 (k+1) + k
\end{equation}
and the upper bound
\begin{equation}\label{eq:gen1upper} 
t_{k,n} \leq k+1 - \frac{c}{\log_2 n},
\end{equation}
where $c>0$ is a constant such that \eqref{eq:Shao} holds.
Before we begin, note that 
\[ \|\mathbbm{1}_{\{0,1,\ldots,n-1\}}\|_{\textup{U}^k} \leq n^{1/p_{k,n}}, \]
so \eqref{eq:comparable} and \eqref{eq:pknvstkn} give trivial bounds
\begin{equation}\label{eq:pkntrivial}
\frac{2^k}{k+1} \leq p_{k,n} \leq \frac{2^k}{k+1} + o_{k}^{n\to\infty}(1)
\end{equation}
and
\begin{equation}\label{eq:tkntrivial}
k+1 - o_{k}^{n\to\infty}(1) \leq t_{k,n} \leq k+1.
\end{equation}


\subsection{The lower bound}
The Gowers norm of a complex function $f$ on the real line is defined as
\[ \|f\|_{\textup{U}^k(\R)} := \biggl( \int_{\R^{k+1}} \prod_{(\epsilon_1,\ldots,\epsilon_k)\in\{0,1\}^k} \mathcal{C}^{\epsilon_1+\cdots+\epsilon_k} f(x+\epsilon_1 h_1+\cdots+\epsilon_k h_k) \,\textup{d}x \,\textup{d}h_1 \cdots \textup{d}h_k \biggr)^{1/2^k}. \]
By a result of Eisner and Tao \cite[Theorem 1.12]{ET12} the optimal constant $C_k$ in the inequality
\[ \|g\|_{\textup{U}^k(\R)} \leq C_k \|g\|_{\textup{L}^{2^k/(k+1)}(\R)} \]
on the real line equals
\begin{equation}\label{eq:ETconstant}
C_k = \frac{2^{k/2^k}}{(k+1)^{(k+1)/2^{k+1}}}.
\end{equation}
The equality is attained for all Gaussian functions among other extremizers, namely, for $k\geq2$, the Gaussians modulated by complex trigonometric polynomials of degree at most $k-1$.
This gives us an idea to test inequality \eqref{eq:GowersLp} against truncated discrete analogues of a Gaussian. This idea was already employed by Shao \cite{Sha24}, who did not work with the Gowers norms, but rather motivated the choice by the fact that the Gaussians also extremize real line Young's convolution inequality.

For a parameter $M>1$ and an integer $n\geq2$ define a function $f_{M,n}\colon\Z\to[0,\infty)$ by
\[ f_{M,n}(m) := \begin{cases}
\exp\biggl( -4M^2 \Big( \displaystyle\frac{m}{n} - \frac{1}{2} \Big)^2 \biggr) & \text{for } m\in\{0,1,2,\ldots,n-1\}, \\
0 & \text{otherwise}. 
\end{cases} \]
We also introduce $g,g_M\colon\R\to[0,\infty)$ by the formulae
\begin{align*}
g(x) & := e^{-x^2}, \\
g_M(x) & := \exp\biggl( -4M^2 \Big(x-\frac{1}{2}\Big)^2\biggr) = g(2Mx-M).
\end{align*}
Writing the $(k+1)$-dimensional integral as a limit of its Riemann sums we obtain
\begin{equation}\label{eq:auxlim1}
\lim_{n\to\infty} \frac{1}{n^{k+1}} \|f_{M,n}\|_{\textup{U}^k}^{2^k}
= \|g_M\mathbbm{1}_{[0,1]}\|_{\textup{U}^k(\R)}^{2^k} 
= \frac{1}{(2M)^{k+1}} \|g\mathbbm{1}_{[-M,M]}\|_{\textup{U}^k(\R)}^{2^k}. 
\end{equation}
Also, \eqref{eq:pkntrivial} implies
\begin{align*} 
\|f_{M,n}\|_{\ell^{p_{k,n}}}^{p_{k,n}} 
& = \sum_{m=0}^{n-1} \exp\biggl( -4M^2 p_{k,n} \Big( \frac{m}{n} - \frac{1}{2} \Big)^2 \biggr) \\
& \leq \sum_{m=0}^{n-1} \exp\biggl( -4M^2 \frac{2^k}{k+1} \Big( \frac{m}{n} - \frac{1}{2} \Big)^2 \biggr)
= \|f_{M,n}\|_{\ell^{2^k/(k+1)}}^{2^k/(k+1)} ,
\end{align*}
so we similarly obtain
\begin{align*}
& \limsup_{n\to\infty} \frac{1}{n} \|f_{M,n}\|_{\ell^{p_{k,n}}}^{p_{k,n}}
\leq \lim_{n\to\infty} \frac{1}{n} \|f_{M,n}\|_{\ell^{2^k/(k+1)}}^{2^k/(k+1)} \\
& = \|g_M \mathbbm{1}_{[0,1]}\|_{\textup{L}^{2^k/(k+1)}(\R)}^{2^k/(k+1)}
= \frac{1}{2M} \|g\mathbbm{1}_{[-M,M]}\|_{\textup{L}^{2^k/(k+1)}(\R)}^{2^k/(k+1)}.
\end{align*}
Moreover, by \eqref{eq:pknvstkn} and \eqref{eq:tkntrivial},
\begin{align}
& \limsup_{n\to\infty} \frac{1}{n^{t_{k,n}}} \|f_{M,n}\|_{\ell^{p_{k,n}}}^{2^k} \nonumber \\
& = \limsup_{n\to\infty} \Bigl( \frac{1}{n} \|f_{M,n}\|_{\ell^{p_{k,n}}}^{p_{k,n}} \Bigr)^{t_{k,n}} 
= \Bigl( \limsup_{n\to\infty} \frac{1}{n} \|f_{M,n}\|_{\ell^{p_{k,n}}}^{p_{k,n}} \Bigr)^{\displaystyle\lim_{n\to\infty} t_{k,n}} \nonumber \\
& = \Bigl( \limsup_{n\to\infty} \frac{1}{n} \|f_{M,n}\|_{\ell^{p_{k,n}}}^{p_{k,n}} \Bigr)^{k+1}
\leq \frac{1}{(2M)^{k+1}} \|g\mathbbm{1}_{[-M,M]}\|_{\textup{L}^{2^k/(k+1)}(\R)}^{2^k}. \label{eq:auxlim2}
\end{align}
Dividing \eqref{eq:auxlim1} by \eqref{eq:auxlim2} we conclude
\[ \liminf_{n\to\infty} n^{t_{k,n}-k-1} \Big(\frac{\|f_{M,n}\|_{\textup{U}^k}}{\|f_{M,n}\|_{\ell^{p_{k,n}}}}\Big)^{2^k}
\geq \Big(\frac{\|g\mathbbm{1}_{[-M,M]}\|_{\textup{U}^k(\R)}}{\|g\mathbbm{1}_{[-M,M]}\|_{\textup{L}^{2^k/(k+1)}(\R)}}\Big)^{2^k}. \]
The definition of $p_{k,n}$ guarantees
\[ \|f_{M,n}\|_{\textup{U}^k} \leq \|f_{M,n}\|_{\ell^{p_{k,n}}}, \]
so, taking logarithms, we obtain
\[ \liminf_{n\to\infty} \bigl((t_{k,n}-k-1) \log_2 n\bigr) 
\geq 2^k \log_2 \frac{\|g\mathbbm{1}_{[-M,M]}\|_{\textup{U}^k(\R)}}{\|g\mathbbm{1}_{[-M,M]}\|_{\textup{L}^{2^k/(k+1)}(\R)}}. \]
Finally, we can take the limit as $M\to\infty$ of the right hand side and obtain
\[ \liminf_{n\to\infty} \bigl((t_{k,n}-k-1) \log_2 n\bigr) 
\geq 2^k \log_2 \frac{\|g\|_{\textup{U}^k(\R)}}{\|g\|_{\textup{L}^{2^k/(k+1)}(\R)}} = 2^k \log_2 C_k, \]
which, by \eqref{eq:ETconstant}, is precisely \eqref{eq:gen1lower}.


\subsection{The upper bound}
We will show \eqref{eq:gen1upper} by induction on $k\geq2$. The base $k=2$ is precisely \eqref{eq:Shao}.
The induction step follows from $t_{k+1,n}\leq t_{k,n}+1$, which is a clear consequence of the following implication: if
\begin{equation}\label{eq:upperind}
\|f\|_{\textup{U}^k}^{2^k} \leq \big\| |f|^{2^k/t} \big\|_{\ell^1}^{t},
\end{equation}
holds for every function $f$ supported on \eqref{eq:thecube}, then one also has
\begin{equation}\label{eq:upperind2}
\|f\|_{\textup{U}^{k+1}}^{2^{k+1}} \leq \big\| |f|^{2^{k+1}/(t+1)} \big\|_{\ell^1}^{t+1}
\end{equation}
for every function $f$ supported on the same cube.
To verify the implication, we denote $g=|f|^{2^k/t}$. Applying \eqref{eq:upperind} to the function $x\mapsto\overline{f(x+h)}f(x)$ for each fixed $h$ and using Young's convolution inequality \eqref{eq:Youngineq} gives
\begin{align*} 
\|f\|_{\textup{U}^{k+1}}^{2^{k+1}} 
& = \sum_{h\in\Z^d} \big\|\overline{f(\cdot+h)}f(\cdot)\big\|_{\textup{U}^k}^{2^k}
\leq \sum_{h\in\Z^d} \big\| |f(\cdot+h)|^{2^k/t} |f(\cdot)|^{2^k/t} \big\|_{\ell^{1}}^{t} \\
& = \|g\ast\widetilde{g}\|_{\ell^t}^{t} 
\leq \|g\|_{\ell^{2t/(t+1)}}^{t} \|\widetilde{g}\|_{\ell^{2t/(t+1)}}^{t}
= \|g\|_{\ell^{2t/(t+1)}}^{2t} = \big\| |f|^{2^{k+1}/(t+1)} \big\|_{\ell^1}^{t+1}. 
\end{align*}
Thus, \eqref{eq:upperind} implies \eqref{eq:upperind2}.


\section{Proof of Theorem \ref{thm:general2}}
\label{sec:proofofkthm}
Before we begin with the proof of Theorem \ref{thm:general2}, we need to formulate and prove several auxiliary inequalities for the Shannon entropy of special distributions, some of which we could not find in the literature.
Afterwards, we will separately prove the lower bound
\begin{equation}\label{eq:gen2lower} 
\liminf_{k\to\infty} \Bigl( t_{k,n} - \frac{(n-1)\log_2 (2k) - \log_2 (n-1)!}{H_{n-1}} \Bigr) \geq 0
\end{equation}
and the upper bound
\begin{equation}\label{eq:gen2upper} 
\limsup_{k\to\infty} \Bigl( t_{k,n} - \frac{(n-1)\log_2 (2k) - \log_2 (n-1)!}{H_{n-1}} \Bigr) \leq 0 .
\end{equation}

\subsection{Entropy estimates}
\label{subsec:entropy}
Let $X_1,X_2,X_3,\ldots$ be independent symmetric Bernoulli random variables, namely
\[ X_i \sim \begin{pmatrix}
0 & 1 \\
1/2 & 1/2
\end{pmatrix}, \]
so that $X_1+X_2+\cdots+X_m\sim B(m,1/2)$.
This subsection will be concerned with random variabes of the form
\[ h_1 X_1 + \cdots + h_m X_m \]
for arbitrary non-zero integers $h_1,\ldots,h_m$.

First, we need precise estimates for the entropy
\[ H_m = \textup{H}(X_1 + \cdots + X_m) \] 
of the symmetric binomial distribution $B(m,1/2)$, which is the same number that was introduced in \eqref{eq:binomentr}.
The asymptotic series for the numbers $H_m$ as $m\to\infty$ has been derived by several authors \cite{JS99,K98,F99}, but we need exact inequalities. They are available from the work of Adell, Lekuona, and Yu \cite{ALY10}, who strengthened the previous bounds by Chang and Weldon \cite{CW79}.

\begin{lemma}[\cite{ALY10}]\label{lm:entropy2}
We have
\begin{equation}\label{eq:sharpHmest}
\frac{1}{2} \log_2 \frac{e \pi m}{2} - \frac{1}{4m} < H_m < \frac{1}{2} \log_2 \frac{e \pi m}{2} + \frac{1}{10m}
\end{equation}
for every $m\in\N$.
\end{lemma}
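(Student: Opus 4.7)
The plan is to prove the lemma via a quantitative Stirling-type expansion for the binomial coefficients with careful error control. The starting observation is the identity
\[ H_m = m - 2^{-m}\sum_{j=0}^m \binom{m}{j}\log_2\binom{m}{j} = m - \mathbb{E}\big[\log_2\tbinom{m}{S_m}\big], \]
where $S_m\sim B(m,1/2)$, which reduces the problem to estimating the expected log-binomial. The main tool is Stirling's formula in the quantitative form
\[ \log\Gamma(x+1) = x\log x - x + \tfrac{1}{2}\log(2\pi x) + \tfrac{1}{12x} + R(x), \]
with explicit one-sided bounds on $R$. Substituting this into $\log_2\binom{m}{j} = \log_2\Gamma(m+1) - \log_2\Gamma(j+1) - \log_2\Gamma(m-j+1)$ and writing $j = m/2 + s$, one obtains a Taylor expansion in $s/\sqrt{m}$ whose leading part is
\[ \log_2\binom{m}{j} = m - \tfrac{1}{2}\log_2\tfrac{\pi m}{2} - \tfrac{2 s^2}{m\ln 2} + \text{(corrections in } s^4/m^3 \text{ and } 1/m\text{)}. \]

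Taking expectation against $S_m$ and using the moment identities $\mathbb{E}[s^2] = m/4$ and $\mathbb{E}[s^4] = m(3m-2)/16$, the leading contribution is
\[ \mathbb{E}\big[\log_2\tbinom{m}{S_m}\big] = m - \tfrac{1}{2}\log_2\tfrac{\pi m}{2} - \tfrac{1}{2\ln 2} + O(1/m), \]
which rearranges to $H_m = \tfrac{1}{2}\log_2(e\pi m/2) + O(1/m)$, matching the leading term in \eqref{eq:sharpHmest}. The Taylor expansion is only valid in a central regime, say $|s|\leq m^{2/3}$, so the first technical step is to discard the tail: a Chernoff-type bound shows $\mathbb{P}(|S_m - m/2| > m^{2/3})$ is super-polynomially small, and since $\log_2\binom{m}{j} \leq m$, this contribution is absorbed into the error.

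The hard part will be extracting the sharp constants $1/(4m)$ and $1/(10m)$ rather than a mere $O(1/m)$ bound, and in particular the asymmetry between the upper and lower side. Achieving this requires carrying the Stirling expansion at least to the $1/(12x) - 1/(360 x^3)$ terms, controlling the remainder via a monotone integral representation for $\log\Gamma$ (Binet's second formula works well), and assembling the contributions from $\log\Gamma(m+1)$, $\log\Gamma(j+1)$, $\log\Gamma(m-j+1)$ while keeping one-sided signs on each error term. This careful bookkeeping is exactly the content of \cite{ALY10}, so rather than redo it I would invoke their theorem directly; the sketch above is offered only to indicate why the claimed sharp constants are plausible and where the main difficulty lies.
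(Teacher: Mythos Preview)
Your proposal is correct and ultimately takes the same approach as the paper: both defer to \cite{ALY10}. The paper is simply more direct—it quotes \cite[Formula (7)]{ALY10} (after converting from natural to base-$2$ logarithm), which already gives $C_1/m + C_2/m^2 + C_3/m^3 < H_m - \tfrac{1}{2}\log_2\tfrac{e\pi m}{2} < C_4/m$ with explicit numerical constants $C_1\approx -0.246$, $C_2\approx 0.175$, $C_3\approx -0.004$, $C_4\approx 0.082$, and then observes $C_1 > -1/4$, $C_4 < 1/10$, and $C_2/m^2 + C_3/m^3 > 0$; no Stirling sketch is needed.
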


\begin{proof}
The authors of \cite{ALY10} used the natural logarithm in the definition of the entropy (i.e., they measure it in nats), whereas we chose the base-$2$-logarithm in \eqref{eq:entrdef} (i.e., the entropy is measured in bits).
Apart from that modification, \cite[Formula (7)]{ALY10}, which is a consequence of \cite[Corollary 1]{ALY10}, specialized to the symmetric binomial distribution reads
\[ \frac{C_1}{m} + \frac{C_2}{m^2} + \frac{C_3}{m^3} < H_m - \frac{1}{2} \log_2 \frac{e \pi m}{2} < \frac{C_4}{m}, \]
where
\begin{align*}
C_1 & = -0.24606\ldots, \\
C_2 & = 0.17527\ldots, \\
C_3 & = -0.00400\ldots, \\
C_4 & = 0.08202\ldots.
\end{align*}
It remains to disregard the terms $C_2/m^2$ and $C_3/m^3$ on the left hand side, since their sum is always positive.
\end{proof}

\begin{corollary}\label{cor:decreasing}
The sequence $(H_m/m)_{m=1}^{\infty}$ is strictly decreasing, i.e., 
\[ \frac{H_1}{1} > \frac{H_2}{2} > \frac{H_3}{3} > \frac{H_4}{4} > \cdots. \]
\end{corollary}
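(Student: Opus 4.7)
The plan is to rewrite the claim $H_m/m > H_{m+1}/(m+1)$ as $(m+1)H_m > mH_{m+1}$ and apply Lemma \ref{lm:entropy2}, bounding $H_m$ from below and $H_{m+1}$ from above. After rearranging $(m+1)\log_2 m - m\log_2(m+1)$ as $\log_2 m - m\log_2(1+1/m)$, this yields the estimate
\[ (m+1)H_m - mH_{m+1} > \tfrac{1}{2}\log_2(e\pi m/2) - \tfrac{m}{2}\log_2(1+1/m) - \tfrac{m+1}{4m} - \tfrac{m}{10(m+1)}. \]

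Using $\ln(1+x) \leq x$ one bounds the middle summand by $\tfrac{1}{2\ln 2}$, and for every $m \geq 1$ the last two summands together by $\tfrac{1}{2} + \tfrac{1}{10} = \tfrac{3}{5}$. The resulting lower bound $\tfrac{1}{2}\log_2(e\pi m/2) - \tfrac{1}{2\ln 2} - \tfrac{3}{5}$ is positive whenever $e\pi m /2 > 2^{2(1/(2\ln 2) + 3/5)} \approx 6.25$, i.e., for every $m \geq 2$.

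The entropy bounds of Lemma \ref{lm:entropy2} are too crude to settle $m=1$, but that case is immediate by direct computation: $H_1 = 1$ and $H_2 = \tfrac{3}{2}$, so $H_1/1 = 1 > 3/4 = H_2/2$.

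I do not anticipate any real obstacle, since the key quantitative input is already supplied by Lemma \ref{lm:entropy2}. The only point requiring mild care is choosing constants so that the explicit error terms $\tfrac{1}{4m}$ and $\tfrac{1}{10(m+1)}$ are absorbed uniformly in $m \geq 2$, which the numerics above confirm. Morally, the statement holds because $H_m$ grows like $\tfrac{1}{2}\log_2 m$ while the successive increment $H_{m+1}-H_m$ decays like $1/m$, so $H_m - m(H_{m+1}-H_m)$ is eventually dominated by $H_m$ itself and hence strictly positive.
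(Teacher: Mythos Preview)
Your argument is correct and follows essentially the same route as the paper: both proofs rewrite the claim as $(m+1)H_m>mH_{m+1}$, apply the two-sided bound of Lemma~\ref{lm:entropy2}, control the $\log_2(1+1/m)^m$ term via $(1+1/m)^m<e$, and treat $m=1$ by direct computation of $H_1=1$, $H_2=3/2$. The only difference is cosmetic --- the paper groups the terms so that the left side visibly exceeds $1$ while the right side is below $1$, whereas you package everything into a single numerical lower bound --- and your slightly cruder constant $\tfrac{3}{5}$ (versus the paper's $\tfrac{1}{5}+\tfrac{3}{4}$) still suffices for $m\geq2$.
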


\begin{proof}
For any integer $m\geq2$ inequality
\[ (m+1) H_m > m H_{m+1} \]
will follow from Lemma \ref{lm:entropy2} and
\[ (m+1) \Big( \frac{1}{2} \log_2 \frac{e \pi m}{2} - \frac{1}{4m} \Big) 
> m \Big( \frac{1}{2} \log_2 \frac{e \pi (m+1)}{2} + \frac{1}{10(m+1)} \Big). \]
However, the last estimate can be multiplied by $2$ and equivalently rewritten as
\[ \underbrace{\log_2 m}_{\geq 1} 
+ \underbrace{\log_2\frac{\pi}{2}}_{>0}
+ \underbrace{\log_2 e - \log_2 \Big(1+\frac{1}{m}\Big)^m}_{>0} 
> \underbrace{\frac{m}{5(m+1)} + \frac{m+1}{2m}}_{<1/5+3/4<1}, \]
which clearly holds.
Finally, the remaining inequality between the first two binomial entropies is trivial because of $H_1=1$, $H_2=3/2$.
\end{proof}

We turn to formulating and proving more general inequalities for the Shannon entropy that will play a crucial role in the proof of Theorem \ref{thm:general2}.
Recall that $p_Z$ denotes the probability mass function of a discrete, integer-valued random variable $Z$; see Section \ref{sec:notation}.
Also recall that $p_Z^{\downarrow}$ is its decreasing rearrangement.

Let us begin with a purely combinatorial result.

\begin{lemma}\label{lm:majorization}
Let $h_1,\ldots,h_m$ be nonzero integers, $X = h_1X_1+\ldots+h_mX_m$, and $Y \sim B(m,1/2)$. Then $p_Y^{\downarrow}$ majorizes $p_X^{\downarrow}$. Moreover, we have $p_X^{\downarrow} = p_{Y}^{\downarrow}$ if and only if $|h_1| = \ldots = |h_m|$.
\end{lemma}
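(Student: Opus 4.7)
My plan is to combine a sign reduction with a symmetric chain decomposition (SCD) of the Boolean lattice. First, for every index $i$ with $h_i<0$ I substitute $X_i\leftarrow 1-X_i$, which preserves the Bernoulli$(1/2)$ marginal and the independence of the $X_i$'s. Under this change each $h_iX_i$ with $h_i<0$ becomes $h_i+|h_i|X_i'$, so
\[ X = c + \sum_{i=1}^{m}|h_i|X_i', \qquad c:=\sum_{h_i<0}h_i. \]
An overall shift merely relabels the atoms of $p_X$, so $p_X^{\downarrow}$ is unaffected and I may assume $h_1,\ldots,h_m$ are positive integers.

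Next, I fix any SCD $\mathcal{D}$ of $\mathcal{P}(\{1,\ldots,m\})$. Each chain $\mathcal{C}\in\mathcal{D}$ has the form $S_a\subsetneq S_{a+1}\subsetneq\cdots\subsetneq S_{m-a}$ with $|S_j|=j$. Passing from $S_j$ to $S_{j+1}$ adjoins a single element of positive weight, so the weighted subset sums strictly increase along $\mathcal{C}$. Consequently, for any $n$-element set $V\subset\R$, the chain $\mathcal{C}$ contributes at most $\min(|\mathcal{C}|,n)$ subsets with sum in $V$, hence
\[ \#\bigl\{S\subseteq\{1,\ldots,m\}\,:\,\textstyle\sum_{i\in S}h_i\in V\bigr\}\leq \sum_{\mathcal{C}\in\mathcal{D}}\min(|\mathcal{C}|,n). \]
Now pick a block $B$ of $n$ consecutive levels around $m/2$ whose levels carry the $n$ largest binomial coefficients. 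The chain ranges $[a,m-a]$ are all symmetric about $m/2$ and hence nested, and so is $B$, making every chain either contain $B$ or be contained in $B$; this yields $|\mathcal{C}\cap B|=\min(|\mathcal{C}|,n)$, and summing over $\mathcal{C}$ gives $\sum_{\mathcal{C}}\min(|\mathcal{C}|,n)=\sum_{j\in B}\binom{m}{j}$, the sum of the $n$ largest binomial coefficients. Dividing by $2^m$ and choosing $V$ to capture the $n$ largest values of $p_X$ yields $\sum_{i=1}^{n}p_X^{\downarrow}(i)\leq\sum_{i=1}^{n}p_Y^{\downarrow}(i)$ for every $n$. Since both distributions have total mass $1$, this is the required majorization.

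For the equality clause, suppose $p_X^{\downarrow}=p_Y^{\downarrow}$. Since $p_Y^{\downarrow}$ has exactly $m+1$ nonzero entries, so does $p_X^{\downarrow}$, meaning $X$ has exactly $m+1$ atoms. Sort the (still positive) weights as $h_1\leq h_2\leq\cdots\leq h_m$; the prefix sums $P_j:=h_1+\cdots+h_j$ for $j=0,\ldots,m$ are $m+1$ distinct atoms and hence exhaust the atom set. The subset $\{2,3,\ldots,m\}$ has sum $P_m-h_1$, which strictly exceeds $P_{m-1}=P_m-h_m$ as soon as $h_1<h_m$ and is strictly less than $P_m$, placing it between two consecutive prefix sums and contradicting that the $P_j$'s are all the atoms. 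Hence $h_1=h_m$, forcing $|h_1|=\cdots=|h_m|$. Conversely, if all $|h_i|$ agree then the sign reduction gives $X=c+c'Y$ for constants $c,c'$, whence $p_X^{\downarrow}=p_Y^{\downarrow}$.

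I expect the main subtlety to lie in the identity $\sum_{\mathcal{C}\in\mathcal{D}}\min(|\mathcal{C}|,n)=\sum_{\text{top }n}\binom{m}{j}$: while combinatorially transparent, it rests on the nested structure of SCD chains around the middle level together with a parity-aware choice of the block $B$. The atom-count step for equality is short but still needs careful bookkeeping to land the offending subset sum strictly between two consecutive prefix sums.
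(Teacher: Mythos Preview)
Your argument is correct. For the majorization half, you and the paper reach the same inequality by the same underlying combinatorics: the top-$n$ mass of $p_X$ is bounded by the sum of the $n$ largest binomial coefficients, which is Erd\H{o}s' generalization of Sperner's theorem. The paper simply cites that theorem as a black box, whereas you rederive it via a symmetric chain decomposition; the SCD route is one of the standard proofs of Erd\H{o}s' result, so this is the same idea packaged differently, with your version being more self-contained.

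The equality clause is where the two arguments genuinely diverge. The paper uses only the single equality $p_X^{\downarrow}(1)=p_Y^{\downarrow}(1)$: this forces the fiber $\mathcal{A}_{\pi(1)}$ to have size $\binom{m}{\lfloor m/2\rfloor}$, and the equality case of Sperner's theorem then pins it down as a full layer $\binom{\{1,\ldots,m\}}{k}$, after which swapping one element for another in a $k$-set shows all $h_i$ coincide. Your route instead exploits the full hypothesis to count atoms: since $p_Y^{\downarrow}$ has exactly $m+1$ nonzero entries, so does $p_X^{\downarrow}$; the $m+1$ strictly increasing prefix sums $P_0<\cdots<P_m$ therefore exhaust the support, and any disparity $h_1<h_m$ manufactures the value $P_m-h_1$ strictly between the consecutive atoms $P_{m-1}$ and $P_m$. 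Your argument is the more elementary of the two (no appeal to the Sperner equality case), while the paper's version extracts a nominally stronger conclusion, needing only the top atom of $p_X$ and $p_Y$ to match.
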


Here we view $p_X^{\downarrow}$ and $p_Y^{\downarrow}$ as $N$-tuples for a positive integer $N$ sufficiently large that both of their supports are contained in $\{1,2,\ldots,N\}$. In this situation the notion of ``majorization'' from Section \ref{sec:notation} applies too.

\begin{proof}
Changing some signs among $h_1,\ldots,h_m$ only translates the distribution of $h_1 X_1 + \cdots + h_m X_m$ along the integers and preserves the entropy, because
\[ - h_i X_i = h_i (1-X_i) - h_i \sim h_i X_i - h_i. \]
Thus, we can assume that all numbers $h_i$ are positive integers. 
Also, the case $m=1$ is trivial because of $X\sim Y$, so we assume that $m\geq2$.

Consider, for each $z \in \Z$, the family
\[ \mathcal{A}_z := \Bigl\{A \subseteq \{1,2,\ldots,m\} \,:\, \sum_{i\in A}h_i = z\Bigr\}, \]
which is readily seen to be an antichain. 
It is clear that
\[ \sum_{j=1}^{\infty} p_Y^{\downarrow}(j) = 1 = \sum_{j=1}^{\infty} p_X^{\downarrow}(j). \]
Furthermore, given any $n \in \mathbb{N}$, we want to show that
\begin{equation}\label{eq:mass_functions}
\sum_{j=1}^{n} p_Y^{\downarrow}(j) \geq \sum_{j=1}^{n}p_X^{\downarrow}(j).
\end{equation}
On the one hand, observe that 
\[ p_Y^{\downarrow}(1),\ p_Y^{\downarrow}(2),\ p_Y^{\downarrow}(3),\ p_Y^{\downarrow}(4),\ \ldots \]
appearing on the left hand side of \eqref{eq:mass_functions} are simply ($1/2^{m}$)-multiples of the binomial coefficients with $m$ on top, sorted in the descending order; namely,
\[ 2^{-m}\binom{m}{\lfloor m/2\rfloor},\, 2^{-m}\binom{m}{\lfloor m/2\rfloor+1},\, 2^{-m}\binom{m}{\lfloor m/2\rfloor-1},\, 2^{-m}\binom{m}{\lfloor m/2\rfloor+2},\, \ldots. \]
On the other hand, choose a bijection $\pi \colon \mathbb{N} \to \mathbb{Z}$ such that $p_X^{\downarrow} = p_X \circ \pi$.
The right-hand side of \eqref{eq:mass_functions} equals $|\mathcal{B}_n|/2^m$, where we define
\[ \mathcal{B}_n := \bigcup_{j=1}^{n}\mathcal{A}_{\pi(j)}. \]
Note that $\mathcal{B}_n$ is a union of $n$ antichains in $\mathcal{P}(\{1,2,\ldots,m\})$, so, in particular, any chain in $\mathcal{B}_n$ has size at most $n$. 
Erd\H{o}s \cite{Erd45} proved a generalization of Sperner's theorem bounding the maximal size of a subcollection of $\mathcal{P}(\{1,2,\ldots,m\})$ that does not have a chain of length longer than $n$.
Namely, by \cite[Theorem 5]{Erd45}, it follows that $|\mathcal{B}_n|$ is at most the sum of the $n$ largest binomial coefficients of the form $\binom{m}{k}$ with $0 \leq k \leq m$. Therefore, the inequality \eqref{eq:mass_functions} follows.

Turning to the second statement, it is clear that $h_1 = \ldots = h_m$ implies $p_X^{\downarrow} = p_Y^{\downarrow}$. To prove the converse, note that $p_X^{\downarrow}(1) = p_Y^{\downarrow}(1)$ implies that $|\mathcal{A}_{\pi(1)}| = \binom{m}{\lfloor m/2\rfloor}$. Since $\mathcal{A}_{\pi(1)}$ is an antichain, it follows from the equality case in Sperner's theorem (see e.g.\ \cite[Theorem 1.2.2]{anderson}) that $\mathcal{A}_{\pi(1)}$ is the family of all subsets of $\{1,2,\ldots,m\}$ of some fixed size $k \in \{\lfloor m/2 \rfloor, \lceil m/2\rceil\}$. But now, for any distinct $i,j\in\{1,\ldots,m\}$, we may choose a subset $A \subseteq \{1,\ldots,m\}\setminus\{i,j\}$ of size $k-1$. In particular, $A \cup \{i\}$ and $A\cup\{j\}$ both belong to $\mathcal{A}_{\pi(1)}$. This means that
\[ h_i + \sum_{a\in A}h_a = \sum_{a\in A\cup\{i\}}h_a = \pi(1) = \sum_{a\in A\cup\{j\}}h_a = h_j + \sum_{a\in A}h_a, \]
whence $h_i = h_j$, as desired.
\end{proof}

\begin{remark}
The fact that the size of a union of $n$ antichains in $\mathcal{P}(\{1,2,\ldots,m\})$ is maximized by taking the $n$ largest layers can also be easily proved by means of the Yamamoto--Bollob\'{a}s--Lubell--Meshalkin inequality \cite{lubell}. 
\end{remark}

The following consequence of the previous lemma can be thought of as an entropic variant of the Littlewood--Offord problem \cite{Erd45}.

\begin{proposition}\label{prop:niceentropy}
For every $m\in\N$ and arbitrary $h_1,\ldots,h_m\in\Z\setminus\{0\}$ we have
\[ \textup{H}(h_1 X_1 + \cdots + h_m X_m) \geq H_m, \]
with equality attained only when $|h_1|=\cdots=|h_m|$.
\end{proposition}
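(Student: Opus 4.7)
The plan is to deduce the proposition directly from Lemma \ref{lm:majorization} combined with Karamata's inequality, applied to the strictly concave function $\psi(x) := -x \log_2 x$ on $[0,1]$ (with the convention $\psi(0) = 0$). The main conceptual point is that the Shannon entropy is a Schur-concave functional: it depends only on the multiset of values taken by the probability mass function, and is decreased under majorization of the rearranged mass functions. The combinatorial work already done in Lemma \ref{lm:majorization} identifies the binomial distribution as the most concentrated one in the class under consideration, so it must have the smallest entropy.

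More concretely, I would set $X := h_1 X_1 + \cdots + h_m X_m$ and $Y \sim B(m,1/2)$, and pick $N \in \N$ large enough that $p_X^{\downarrow}$ and $p_Y^{\downarrow}$ are both supported in $\{1,2,\ldots,N\}$; extend by zeros and view them as decreasing $N$-tuples. Then
\[ \textup{H}(X) = \sum_{z \in \Z} \psi(p_X(z)) = \sum_{j=1}^{N} \psi(p_X^{\downarrow}(j)), \]
and analogously for $Y$, where the second equality uses $\psi(0) = 0$. By Lemma \ref{lm:majorization} the tuple $p_Y^{\downarrow}$ majorizes $p_X^{\downarrow}$; applying Karamata's inequality in its concave form (i.e., with the inequality reversed) to $\psi$ yields
\[ H_m = \textup{H}(Y) = \sum_{j=1}^{N} \psi(p_Y^{\downarrow}(j)) \leq \sum_{j=1}^{N} \psi(p_X^{\downarrow}(j)) = \textup{H}(X), \]
which is the stated inequality.

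For the equality case, strict concavity of $\psi$ and the equality clause in Karamata's inequality force $p_X^{\downarrow} = p_Y^{\downarrow}$, and by the second part of Lemma \ref{lm:majorization} this happens only if $|h_1| = \cdots = |h_m|$. Conversely, if $|h_1| = \cdots = |h_m| = h$, then $X$ has the same distribution as $h(X_1 + \cdots + X_m)$ up to a sign and a translation (using $-X_i \sim X_i - 1$), so $\textup{H}(X) = H_m$. I do not anticipate a significant obstacle: all the combinatorial difficulty, which rested on Sperner/Erd\H{o}s-type antichain bounds, has already been absorbed into Lemma \ref{lm:majorization}, so what remains is the routine deduction that a majorization statement for probability vectors implies the corresponding entropy inequality via Karamata.
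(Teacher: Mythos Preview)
Your proof is correct and follows essentially the same approach as the paper: define the strictly concave $\psi(x) = -x\log_2 x$, express entropy via the decreasing rearrangement, invoke Lemma~\ref{lm:majorization} to get that $p_Y^{\downarrow}$ majorizes $p_X^{\downarrow}$, and apply Karamata's inequality (together with its equality clause) to conclude. The only addition you make is spelling out the easy converse in the equality case, which the paper leaves implicit.
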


Note that the case of equality, namely $|h_1|=\cdots=|h_m|$, means precisely that $h_1 X_1 + \cdots + h_m X_m$ is distributed as a nondegenerate affine transformation of the symmetric binomial distribution $B(m,1/2)$.

\begin{proof}
Consider the function
\[ \psi \colon [0,1] \to \mathbb{R}, \quad \psi(x) := \begin{cases} -x\log_2 x & \text{for } x>0, \\ 0 & \text{for } x=0. \end{cases} \]
Then the entropy \eqref{eq:entrdef} of an arbitrary integer-valued random variable $Z$ taking only finitely many values can be expressed as
\[ H(Z) = \sum_{z\in\Z} \psi(p_Z(z)) = \sum_{n=1}^{\infty} \psi(p_Z^{\downarrow}(n)). \]
Again take $X = h_1X_1+\ldots+h_mX_m$ and $Y \sim B(m,1/2)$.
From Lemma \ref{lm:majorization} we know that $p_Y^{\downarrow}$ majorizes $p_X^{\downarrow}$.
Since $\psi$ is strictly concave, the desired estimate,
\[ H(X) \geq H(Y), \]
follows from Karamata's inequality recalled in Section \ref{sec:notation}.
Equality holds if and only if $p_X^{\downarrow}=p_Y^{\downarrow}$, which is, by Lemma \ref{lm:majorization} again, satisfied only when all $h_i$ are the same in the absolute value.
\end{proof}

\begin{corollary}\label{cor:entropy}
For every $n\geq2$, $1\leq l\leq n-1$, and $h_1,\ldots,h_l\in \Z\setminus\{0\}$ such that
\begin{equation}\label{eq:corsum}
|h_1|+\cdots+|h_l|\leq n-1
\end{equation} 
we have
\begin{equation}\label{eq:mainetropyin}
\frac{\textup{H}(h_1 X_1 + \cdots + h_l X_l)}{l} \geq \frac{H_{n-1}}{n-1},
\end{equation}
with equality attained only when $l=n-1$ and $|h_1|=\cdots=|h_{n-1}|=1$, in which case $h_1 X_1 + \cdots + h_{n-1} X_{n-1}$ is distributed as a translate of $B(n-1,1/2)$.
\end{corollary}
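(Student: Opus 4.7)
The plan is to chain together the two immediate predecessors: Proposition \ref{prop:niceentropy} supplies a lower bound in terms of $H_l$, and Corollary \ref{cor:decreasing} then allows us to replace $H_l/l$ by the desired $H_{n-1}/(n-1)$. In other words, the strategy is to decouple the dependence on the specific coefficients $h_1,\ldots,h_l$ (handled by Proposition \ref{prop:niceentropy}) from the dependence on the number of summands (handled by Corollary \ref{cor:decreasing}).

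Concretely, I would first apply Proposition \ref{prop:niceentropy} to the $l$-tuple $(h_1,\ldots,h_l)$ to get $\textup{H}(h_1 X_1 + \cdots + h_l X_l) \geq H_l$, with equality only when $|h_1|=\cdots=|h_l|$. Dividing by $l$ reduces the statement to the comparison $H_l/l \geq H_{n-1}/(n-1)$. Since each $|h_i|$ is a positive integer, the hypothesis \eqref{eq:corsum} forces $l \leq n-1$, so this comparison is exactly what Corollary \ref{cor:decreasing} provides, with strict monotonicity unless $l = n-1$.

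For the equality analysis, I split on the value of $l$. If $l < n-1$, the monotonicity step is already strict and \eqref{eq:mainetropyin} is strict regardless of the $h_i$. If $l = n-1$, the only way to achieve equality is to have equality in Proposition \ref{prop:niceentropy}, i.e., $|h_1| = \cdots = |h_{n-1}|$; combining this with \eqref{eq:corsum} and $|h_i|\geq 1$ forces $|h_1|=\cdots=|h_{n-1}|=1$. To identify the distribution in the equality case I would use the fact (already exploited in the proof of Lemma \ref{lm:majorization}) that $-X_i \sim X_i - 1$, so that flipping the signs of those coordinates with $h_i = -1$ only shifts the sum by an integer, leaving a translate of $X_1 + \cdots + X_{n-1} \sim B(n-1,1/2)$.

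No serious obstacle is expected here: the corollary is essentially a direct two-line combination of the two preceding results, and the only subtlety is the bookkeeping for the equality case, which is resolved by the integrality of the $|h_i|$ together with the tight arithmetic constraint $\sum |h_i| \leq n-1$.
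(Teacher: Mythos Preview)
Your proposal is correct and follows essentially the same route as the paper: apply Proposition \ref{prop:niceentropy} to reduce to $H_l/l \geq H_{n-1}/(n-1)$, then invoke Corollary \ref{cor:decreasing}, and finally resolve the equality case via the integrality constraint \eqref{eq:corsum}. The only superfluous step is noting that \eqref{eq:corsum} forces $l\leq n-1$, since $1\leq l\leq n-1$ is already part of the hypotheses.
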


\begin{proof}
If the number of terms $l$ is fixed, then, by Proposition \ref{prop:niceentropy}, the left hand side of \eqref{eq:mainetropyin} is at least $H_l/l$ and then the strict inequality for $1\leq l\leq n-2$ follows from Corollary \ref{cor:decreasing}. 
The inequality also holds when $l=n-1$ and, by Proposition \ref{prop:niceentropy} again, we must additionally have $|h_1|=\cdots=|h_{n-1}|$ in the case of an equality. This combined with the assumption \eqref{eq:corsum} means that all these absolute values need to be $1$.
\end{proof}

\begin{remark}
Corollary \ref{cor:entropy} will be sufficient for our intended application and it allows for a conceptually simpler, but more computational proof.
Namely, the most classical result on the Littlewood--Offord problem \cite[Theorem 1]{Erd45},
\[ \max_{z\in\Z} \mathbb{P}(h_1 X_1 + \cdots + h_l X_l=z) \leq \binom{l}{\lfloor l/2\rfloor} 2^{-l}, \]
gives a ``cheaper'' lower bound
\[ \textup{H}(h_1 X_1 + \cdots + h_l X_l) > \frac{1}{2} \log_2 \frac{\pi l}{2}. \]
For $n>100$ and $1\leq l\leq 3(n-1)/4$ we then use Lemma \ref{lm:entropy2} and some elementary calculus to show
\begin{align*}
& 2(n-1)\textup{H}(h_1 X_1 + \cdots + h_l X_l) - 2l H_{n-1} \\
& > (n-1) \log_2 \frac{\pi l}{2} - 2l H_{n-1} \\
& > (n-1) \log_2 \frac{\pi l}{2} - l \log_2 \frac{e \pi (n-1)}{2} - \frac{l}{5(n-1)} >0. 
\end{align*}
On the other hand, when $n>100$ and $3(n-1)/4<l\leq n-2$, the assumption \eqref{eq:corsum} forces at least $2l-n+1$ of the numbers $h_i$ to be $\pm1$, which easily gives
\[ \textup{H}(h_1 X_1 + \cdots + h_l X_l) \geq H_{2l-n+1}. \]
Then Lemma \ref{lm:entropy2} and basic calculus yield
\begin{align*}
& 2(n-1)\textup{H}(h_1 X_1 + \cdots + h_l X_l) - 2l H_{n-1} \\
& \geq 2(n-1) H_{2l-n+1} - 2l H_{n-1} \\
& > (n-1) \log_2 \frac{e\pi(2l-n+1)}{2} - \frac{n-1}{2(2l-n+1)} - l \log_2 \frac{e\pi(n-1)}{2} - \frac{l}{5(n-1)} > 0.
\end{align*}
Finally, if $n\leq 100$, then \eqref{eq:corsum} leaves only finitely many cases to check, and a combination of the previous reasoning and a computer-assisted verification handles all of them.
\end{remark}


\subsection{The lower bound}
Recall that, by Proposition \ref{prop:product}, the number $t_{k,n}$ is the smallest $t>0$ such that 
\begin{equation}\label{eq:gen2temp}
\sum_{a,h_1,\ldots,h_k\in\Z} \prod_{(\epsilon_1,\ldots,\epsilon_k)\in\{0,1\}^k} f(a+\epsilon_1 h_1+\cdots+\epsilon_k h_k)
\leq \bigg( \sum_{j=0}^{n-1} f(j)^{2^k/t} \bigg)^t 
\end{equation}
holds for every function $f\colon\Z\to[0,\infty)$ supported in $\{0,1,\ldots,n-1\}$.
On the left hand side of inequality \eqref{eq:gen2temp} we only observe the mutually equal terms obtained by taking:
\begin{itemize}
\item $a\in\{0,1,\ldots,n-1\}$ arbitrary, 
\item precisely $a$ of the numbers $h_1,\ldots,h_k$ to be equal to $-1$,
\item precisely $n-1-a$ of the numbers $h_1,\ldots,h_k$ to be equal to $1$, and
\item precisely $k-n+1$ of the numbers $h_1,\ldots,h_k$ to be equal to $0$.
\end{itemize}
There are 
\[ \binom{k}{n-1} \sum_{a=0}^{n-1} \binom{n-1}{a} = \binom{k}{n-1} 2^{n-1} \]
such choices altogether, and they all contribute to the sum in \eqref{eq:gen2temp} with the same term
\[ \prod_{j=0}^{n-1} f(j)^{\binom{n-1}{j}2^{k-n+1}}. \]
Thus, by using inequality \eqref{eq:gen2temp} with $t=t_{k,n}$, we obtain
\[ \binom{k}{n-1} 2^{n-1} \prod_{j=0}^{n-1} f(j)^{\binom{n-1}{j}2^{k-n+1}} \leq \bigg( \sum_{j=0}^{n-1} f(j)^{2^k/t_{k,n}} \bigg)^{t_{k,n}}. \]
Now take a particular function $f$ defined as
\[ f(j) := \biggl( \frac{\binom{n-1}{j}}{2^{n-1}} \biggr)^{t_{k,n}/2^k} \]
for $0\leq j\leq n-1$ and $f(j):=0$ otherwise, so that the last inequality gives us
\[ \binom{k}{n-1} 2^{n-1} \prod_{j=0}^{n-1} \biggl( \frac{\binom{n-1}{j}}{2^{n-1}} \biggr)^{\binom{n-1}{j}t_{k,n}/2^{n-1}}
\leq \bigg( \sum_{j=0}^{n-1} \frac{\binom{n-1}{j}}{2^{n-1}} \bigg)^{t_{k,n}} =  1. \]
Taking logarithms,
\[ \sum_{j=0}^{n-2}\log_2 (k-j) - \log_2 (n-1)! + (n-1) + \underbrace{\sum_{j=0}^{n-1} \frac{\binom{n-1}{j}t_{k,n}}{2^{n-1}} \log_2 \frac{\binom{n-1}{j}}{2^{n-1}}}_{- H_{n-1} t_{k,n}} \leq 0, \]
i.e.,
\[ \liminf_{k\to\infty} \bigl( H_{n-1} t_{k,n} - (n-1) \log_2 (2k) + \log_2 (n-1)! \bigr)
\geq \lim_{k\to\infty} \sum_{j=0}^{n-2}\log_2 (1-j/k)
= 0, \]
which proves \eqref{eq:gen2lower}.


\subsection{The upper bound}
Note that the term on the left hand side of \eqref{eq:gen2temp} corresponding to at least $n$ nonzero numbers $h_1,\ldots,h_k$ must be identically $0$. 
At the other extreme are the terms when $h_1=\cdots=h_k=0$.
All remaining terms can be grouped by choosing $1\leq l\leq n-1$ nonzero numbers among $h_1,\ldots,h_k$ in $\binom{k}{l}$ ways.
Therefore, for a fixed integer $n\geq 2$ and every $l\in\{1,\ldots,n-1\}$ we define
\begin{align*} 
T_{n,l} := \bigl\{ (a,h_1,\ldots,h_l) \in \Z \times (\Z\setminus\{0\})^l \,:\ & 0\leq a + \epsilon_1 h_1 + \cdots + \epsilon_l h_l\leq n-1 \\
& \text{for every } (\epsilon_1,\ldots,\epsilon_l)\in\{0,1\}^l \bigr\},
\end{align*}
so that inequality \eqref{eq:gen2temp} can now be rewritten as
\begin{align*} 
\sum_{j=0}^{n-1} f(j)^{2^k} + \sum_{l=1}^{n-1} \sum_{(a,h_1,\ldots,h_l)\in T_{n,l}} \binom{k}{l} \prod_{(\epsilon_1,\ldots,\epsilon_l)\in\{0,1\}^l} f(a+\epsilon_1 h_1+\cdots+\epsilon_l h_l)^{2^{k-l}} & \\[-2mm]
\leq \bigg( \sum_{j=0}^{n-1} f(j)^{2^k/t} \bigg)^t & .
\end{align*}
By substituting $g(j)=f(j)^{2^k/t}$, the estimate \eqref{eq:gen2temp} is further equivalent to
\begin{equation}\label{eq:gen2ineq}
\sum_{j=0}^{n-1} g(j)^{t} + \sum_{l=1}^{n-1} \sum_{(a,h_1,\ldots,h_l)\in T_{n,l}} \binom{k}{l} \prod_{(\epsilon_1,\ldots,\epsilon_l)\in\{0,1\}^l} g(a+\epsilon_1 h_1+\cdots+\epsilon_l h_l)^{t/2^l} \leq 1
\end{equation}
for every $g\colon\{0,1,\ldots,n-1\}\to[0,\infty)$ such that $\sum_{j=0}^{n-1}g(j)=1$.
It is understood that only such functions $g$ are considered in the rest of this subsection.
We will prove \eqref{eq:gen2ineq} for any given $0<\delta<1$, for
\begin{equation}\label{eq:choiceoft}
t = \frac{(n-1)\log_2 (2k) - \log_2 (n-1)!}{H_{n-1}} + \delta,
\end{equation}
and for all positive integers $k$ that are sufficiently large depending on $n$ and $\delta$. This will establish \eqref{eq:gen2upper} by showing that the upper limit in question is at most $\delta$ for every $\delta>0$.

Let us first make some preliminary observations.
Every summand, other than $g(j)^t$, on the left hand side of \eqref{eq:gen2ineq} is of the form
\begin{equation}\label{eq:gen2summand}
\binom{k}{l} \bigl( g(0)^{q_0} g(1)^{q_1} \cdots g(n-1)^{q_{n-1}} \bigr)^t
\end{equation}
where $(q_0,q_1,\ldots,q_{n-1})$ is some discrete probability distribution on the set $\{0,1,\ldots,n-1\}$,
which is also the distribution of some random variable $a + h_1 X_1 + \cdots + h_l X_l$ for $(a,h_1,\ldots,h_l)\in T_{n,l}$.
We always interpret $0^0=1$.
For every $g$ as before the inequality between the weighted arithmetic and geometric means gives
\[ 1 = \sum_{j=0}^{n-1} g(j) \geq \sum_{\substack{0\leq j\leq n-1\\ q_j\neq0}} q_j \frac{g(j)}{q_j} \geq \prod_{\substack{0\leq j\leq n-1\\ q_j\neq0}} \Big(\frac{g(j)}{q_j}\Big)^{q_j}
= \frac{\prod_{j=0}^{n-1} g(j)^{q_j}}{\prod_{j=0}^{n-1} q_j^{q_j}}. \]
That way we have obtained
\begin{equation}\label{eq:justag}
\prod_{j=0}^{n-1} g(j)^{q_j} \leq 2^{-\textup{H}(q_0,\ldots,q_{n-1})},
\end{equation}
where we recall that $\textup{H}(q_0,\ldots,q_{n-1})$ is defined by the formula \eqref{eq:entrdef}.
Denote
\[ \theta = 2^{-n 2^n H_{n-1}/(n-1)}. \]
The proof of \eqref{eq:gen2ineq} is split into two cases.
Once again, we always assume that $g$ attains non-negative values that sum to $1$.

\smallskip
\emph{Case 1: for every $0\leq j\leq n-1$ we have $g(j)\leq 1-\theta$.}
Each summand \eqref{eq:gen2summand} from the left hand side of \eqref{eq:gen2ineq} corresponding to some $(a,h_1,\ldots,h_l)\in T_{n,l}$ is, by inequality \eqref{eq:justag}, at most
\[ \binom{k}{l} 2^{-\textup{H}(h_1 X_1 + \cdots + h_l X_l) t}, \]
which is, by the choice \eqref{eq:choiceoft} for $t$, less than or equal to
\begin{numcases}{}
2^{-n+1} \cdot 2^{-H_{n-1}\delta} & \text{for } l=n-1, \label{eq:c1} \\
O_{n}^{k\to\infty}\bigl(k^{l-(n-1)\textup{H}(h_1 X_1 + \cdots + h_l X_l)/H_{n-1}}\bigr) & \text{for } 1\leq l\leq n-2. \label{eq:c2} 
\end{numcases} 
In \eqref{eq:c1} we used the fact that the only possibility to have $l=n-1$ is $a + h_1 X_1 + \cdots + h_{n-1} X_{n-1} \sim B(n-1,1/2)$.
Since $T_{n,n-1}$ has precisely $2^{n-1}$ elements described in the previous subsection and each corresponding term is bounded by \eqref{eq:c1}, we conclude
\[ \sum_{(a,h_1,\ldots,h_{n-1})\in T_{n,n-1}} \binom{k}{n-1} \prod_{(\epsilon_1,\ldots,\epsilon_{n-1})\in\{0,1\}^{n-1}}\!\!\! g(a+\epsilon_1 h_1+\cdots+\epsilon_{n-1} h_{n-1})^{t/2^{n-1}} \leq 2^{-H_{n-1}\delta}. \]
Next, each term corresponding to $T_{n,l}$, $1\leq l\leq n-2$, is bounded by \eqref{eq:c2} and the exponent of $k$ is strictly negative by Corollary \ref{cor:entropy}. Thus,
\[ \sum_{l=1}^{n-2} \sum_{(a,h_1,\ldots,h_l)\in T_{n,l}} \binom{k}{l} \prod_{(\epsilon_1,\ldots,\epsilon_l)\in\{0,1\}^l} g(a+\epsilon_1 h_1+\cdots+\epsilon_l h_l)^{t/2^l} = o_{n}^{k\to\infty}(1). \]
Finally, under the standing assumptions on $g$ and by formula \eqref{eq:choiceoft},
\[ \sum_{j=0}^{n-1} g(j)^{t} \leq n (1-\theta)^t = o_{n}^{k\to\infty}(1). \]
Altogether, the left hand side of \eqref{eq:gen2ineq} is bounded by
\[ \underbrace{2^{-H_{n-1}\delta}}_{<1} + o_{n}^{k\to\infty}(1) \]
for all functions $g$ considered in this case. This is certainly less than $1$ for sufficiently large $k$, just as desired.

\smallskip
\emph{Case 2: for some $0\leq j_0\leq n-1$ we have $g(j_0)>1-\theta$.}
As a consequence of $\sum_j g(j) = 1$ we also have $g(j)<\theta$ for each $j\neq j_0$. 
Denote
\[ N := \sum_{l=1}^{n-1} |T_{n,l}|. \]
Take $k$ sufficiently large so that $t>2^n$; a further largeness requirement on $k$ will be imposed later.
The terms $g(j)^t$ in \eqref{eq:gen2ineq} are here simply controlled as
\[ g(j)^t \leq g(j)^2 \]
for $j=0,1,\ldots,n-1$.
Each of the remaining $N$ terms in \eqref{eq:gen2ineq} is at most
\begin{equation}\label{eq:lastcase}
k^{n-1} g(j_1)^{t/2^n} g(j_2)^{t/2^n}
\end{equation}
for some distinct indices $j_1$ and $j_2$.
Since at least one of these indices is different from $j_0$, we can bound \eqref{eq:lastcase} from the above by
\begin{align*} 
g(j_1) g(j_2) k^{n-1} \theta^{t/2^n-1}
& = g(j_1) g(j_2) k^{n-1} O_{n}^{k\to\infty}(k^{-n}) \\
& = g(j_1) g(j_2) O_{n}^{k\to\infty}(k^{-1})
\leq \frac{2}{N} g(j_1) g(j_2), 
\end{align*}
where the last inequality holds as soon as $k$ is large enough depending on $n$.
Altogether, the left hand side of \eqref{eq:gen2ineq} is then less than or equal to
\[ \sum_{j=0}^{n-1} g(j)^2 + N \cdot \frac{2}{N} \max_{j_1\neq j_2} g(j_1) g(j_2) 
\leq \Big( \sum_{j=0}^{n-1} g(j) \Big)^2 = 1. \]

\smallskip
Both cases are now complete and they finalize the proof of inequality \eqref{eq:gen2ineq}, and thus also of the upper bound \eqref{eq:gen2upper}.


\section*{Acknowledgment}
This work was supported in part by the Croatian Science Foundation under the project HRZZ-IP-2022-10-5116 (FANAP).


\bibliography{Gowers_on_the_cube}{}
\bibliographystyle{plainurl}

\end{document}